\newcommand{\bburl}[1]{\textcolor{blue}{\url{#1}}}
\numberwithin{equation}{section}
\newtheorem{thm}{Theorem}[section]
\newtheorem{cor}[thm]{Corollary}
\theoremstyle{plain}
\newtheorem{definition}[thm]{Definition}
\newtheorem{example}[thm]{Example}
\newtheorem{lemma}[thm]{Lemma}
\newtheorem*{theorem*}{Theorem}
\newtheorem{case}[thm]{Case}
\newtheorem{remark}[thm]{Remark}
\newcommand\N{\mathbb{N}}
\newcommand\Z{\mathbb{Z}}
\newcommand\R{\mathbb{R}}
\begin{document}
\title[Digit sequences of rational billiards on tables which tile $\R^2$.] 
{Digit sequences of rational billiards on tables which tile $\R^2$}
\author{Corey Manack and Marko Savic}
\begin{abstract}
We classify the periodic digit strings which arise from periodic billiard orbits on the four convex $n$-gons $\Delta$ which tile $\R^2$ under reflection, answering problem a posed in \cite{BU08}. $\Delta$ is either an equilateral triangle, a $45-45-90$ triangle, a $30-60-90$ triangle, or a rectangle.
\end{abstract}
\maketitle
\section{Overview}\label{S:overview}
\subsection{Introduction}
Our introduction follows \cite{BU08}. The trajectory $t$ traced out by a billiard ball on a frictionless billiards table is completely determined by its initial position $p_0$ and direction $v_0$. When the ball strikes a rail it bounces in such a way that its angle of incidence equals its angle of reflection. We seek to understand the trace of a billiards trajectory when the billiard table $\Delta$ is a convex $n$-gon $\Delta$ which tiles $\R^2$ under reflection. These are: an equilateral triangle, a $45-45-90$ triangle, a square, and a $30-60-90$ triangle with reflection groups the affine Weyl groups $A_2,B_2,D_2, G_2$ (cf. e.g. \cite{HW88}) respectively.   
These shapes appear as a category of solutions to stamp folding problems \cite{KU11}. If we arbitrarily label the edges of $\Delta$ as $0,1,\ldots,n-1$ then from any trajectory $t$ one may record the sequence of digits $\mod\ n$ from successive bounces of a billiards trajectory. We call any string generated by a billiard trajectory a \emph{billiard sequence} (or billiard string). A trajectory is \emph{periodic} if it returns to its initial position and direction in finite time; otherwise it's called an \emph{aperiodic} trajectory. Obviously periodic trajectories yield periodic sequences. Baxter and Umble \cite{BU08} posed the following question:
\begin{equation}
\label{equ:researchq}
\text{Which digit sequences arise from periodic billiard trajectories on $\Delta$?\footnote{the original question in \cite{BU08} was stated when $\Delta$ is an equilateral triangle.}} 
\end{equation}
See \cite{Bo15} for an overview when $\Delta$ is an equilateral triangle. We provide a complete solution to \ref{equ:researchq} for the four tables listed above. The paper is organized as follows: preliminaries \S\ref{sec:prelims} introduces notation and outlines the strategy for the classification which is carried out in \S\ref{sec:prelims}. The main result (Theorem \ref{thm:Umbledone}) is an reduction of the classification to partitions of the multiples of $q$ mod $n$. Theorem \ref{thm:strED} provides a method of construction for billiard strings in terms of the Euclidean division algoritm. One should think of the Theorems  \ref{thm:strED} \ref{thm:Umbledone} as the an understanding of billiard sequences in terms of the initial direction $v_0$. Section \S\ref{sec:translation} provides an elegant relationship between all billiard sequences of fixed direction $v_0$ (allowing $p_0$ to vary).  We then make some general comments for aperiodic digit sequences in \S\ref{sec:aperiodic}. 
\section{Preliminaries}
\label{sec:prelims}
\noindent To classify billiard sequences employ the standard trick of unfolding in polygonal billiards; by this we mean:    
\begin{enumerate}
	\item $\Delta$ unfolds along its edges of reflection to create a tesselation $\Lambda\subset\R^2$, whose vertices $V(\Lambda)$ forms a rank two lattice in $\R^2$, and 
	\item A given billiard trajectory $t$ may be unfolded along its edges of reflection (in $\Delta$) at each points of contact with the edges of $\Delta$ to yield a ray $\tau$ through $\R^2$.
\end{enumerate} 
It is on this very special collection of tables where the billiards table $\Delta$ unfolds into a convex uniform lattice under reflection. Moreover, $\Lambda$ has a canonical choice for edge labels (once the rails of $\Delta$ ar labelled). Precisely, the edges are the interiors of line segments which connect a vertex in $\Lambda$ to any of its nearest neighbors. \footnote{$\mathcal{T}$ has the structure of a moduli space but this level of sophistication is not required to answer \ref{equ:researchq}.}. Each edge inherits the canonical label from $\Delta$ under reflection.  If we write $E(\Lambda)\subset \Lambda$ for the collection of edges then $\Lambda= V(\Lambda)\sqcup E(\Lambda)$. Conversely, $\tau\subset\Lambda$ may be folded along the edges of $\Lambda$ to produce a billiards orbit $t\in\Delta$. 
\begin{definition}
	Call $\tau$ an \emph{unfolding} of $t$ and $t$ the \emph{folding} of $\tau$.
\end{definition}
\begin{remark}
We reserve the terminology ``trajectory'' or ``orbit'' to mean either $t$ or $\tau$ and write $\tau_s$ for the digit string generated by the intersections of $\tau$ with the edges of $\Lambda$, $t_s$ for the digit string generated by bounces of $t$ with the edges of $\Delta$.  Clearly $\tau_s=t_s$ if $\tau$ is an unfolding of $t$. 
\end{remark}  
The next theorem coincides with our geometric understanding of a $\tau\in\mathcal{T}$ with $\tau_s$ periodic.
\begin{thm}
	A billiards trajectory $t$ is periodic if and only if there is an element $\lambda \in \Lambda$ for which $\tau+\lambda\subset \tau$ 
\end{thm}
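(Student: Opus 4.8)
The plan is to read off the statement from the folding correspondence, treating the unfolding as the quotient map $\pi\colon \R^2 \to \Delta$ by the reflection group $W$ generated by the walls of $\Delta$, so that $\pi(\tau)=t$ and, for every element $\sigma\in W$, one has $\pi\circ\sigma=\pi$. The translations contained in $W$ form a rank-two lattice $L$, each of whose vectors is a label-preserving symmetry of the tessellation $\Lambda$; moreover $L$ has finite index in the vertex lattice $V(\Lambda)$. The whole argument turns on one elementary observation: since $\tau$ is the half-line $\{p_0+sv_0 : s\ge 0\}$, a containment $\tau+\lambda\subset\tau$ forces $\lambda$ to be parallel to $v_0$ and forward-pointing. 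Thus the theorem is really the assertion that $t$ is periodic exactly when the lattice contains a vector in the direction $v_0$, and because $L\subset V(\Lambda)$ has finite index, the existence of such a vector in $V(\Lambda)$ is equivalent to its existence in $L$ (pass to a suitable integer multiple).

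For the forward direction I would track a single period of $t$. A periodic orbit returns to the state $(p_0,v_0)$ after finitely many bounces; unfolding those bounces one at a time realizes this return as an element $g\in W$ carrying the initial state to itself along $\tau$, with the period-length segment of $\tau$ terminating at $g(p_0)$. Because the returning velocity equals $v_0$, the linear part of $g$ fixes every direction, so $g$ is orientation-preserving and fixes all directions, hence is a pure translation $T_\lambda$ with $\lambda\in L$. The endpoint identity $g(p_0)=p_0+\lambda$, together with the fact that this endpoint lies on $\tau$, forces $\lambda$ to be a positive multiple of $v_0$, and therefore $\tau+\lambda\subset\tau$.

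For the converse, suppose $\tau+\lambda\subset\tau$ for some $\lambda$ in the lattice. By the opening observation $\lambda$ is parallel to $v_0$; replacing $\lambda$ by a multiple if necessary we may assume $\lambda\in L$, so that $T_\lambda$ is a label-preserving symmetry of $\Lambda$ with $\pi\circ T_\lambda=\pi$ and $T_\lambda$ fixing the direction $v_0$. Evaluating at $p_0$ gives $\pi(p_0+\lambda)=\pi(p_0)$ with matching folded directions, which says precisely that $t=\pi(\tau)$ revisits the state $(p_0,v_0)$ after the sub-ray from $p_0$ to $p_0+\lambda$ is folded; hence $t$ is periodic.

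The main obstacle I expect is justifying that the one-period return element $g$ is a genuine translation rather than a glide reflection or a rotation: this rests on the fact that in each of the four affine Weyl groups an isometry whose linear part fixes every direction must lie in the translation lattice $L$, which I would verify case by case from the descriptions of $A_2$, $B_2$, $D_2$, $G_2$. A secondary technical point is the handling of degenerate trajectories that strike a point of $V(\Lambda)$, where $\pi$ fails to be single-valued on directions; these I would exclude at the outset by restricting to rays meeting only $E(\Lambda)$, consistent with the decomposition $\Lambda=V(\Lambda)\sqcup E(\Lambda)$ and with the edge-label bookkeeping already in place.
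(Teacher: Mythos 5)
Your converse is sound, and your opening observation (that $\tau+\lambda\subset\tau$ forces $\lambda$ to be a forward-pointing multiple of $v_0$) together with the finite-index reduction from $V(\Lambda)$ to the translation subgroup is exactly the right bookkeeping; for what it is worth, the paper states this theorem without proof, so soundness is the only test here. The forward direction, however, has a genuine gap at the step ``because the returning velocity equals $v_0$, the linear part of $g$ fixes every direction.'' Matching return velocity only tells you that the linear part of $g$ fixes the \emph{single} direction $v_0$, and an orthogonal map of $\R^2$ fixing one direction is either the identity or the reflection across the line through $v_0$. So $g$ may be a glide reflection whose axis is the line containing $\tau$, and this actually occurs: for the Fagnano (midpoint-triangle) orbit in the equilateral triangle, one period consists of three bounces, so $g$ is a composition of three reflections in non-concurrent, non-parallel lines, hence orientation-reversing --- a glide reflection, not a translation. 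Your anticipated remedy does not close this gap: the ``fact'' that an isometry whose linear part fixes every direction lies in the translation lattice is true but vacuous (such an isometry is a translation by definition in any of the four groups); the real issue is that you cannot conclude the linear part fixes every direction in the first place, and no case-by-case inspection of $A_2,B_2,D_2,G_2$ will rule out glide reflections, since all four groups contain them.

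The repair is short: whether $g$ is a translation or a glide reflection along the axis of $\tau$, in both cases $g(\tau)\subset\tau$, and $g^2$ is a pure translation $T_\lambda$ lying in the translation subgroup of the affine Weyl group (the square of a glide reflection is the translation by twice its glide vector), with $\lambda$ a positive multiple of $v_0$. Then $\tau+\lambda=g^2(\tau)\subset\tau$ as required, at the harmless cost of doubling the period. With that substitution --- and keeping your exclusion of singular rays through $V(\Lambda)$, which is needed for the folding of directions to be single-valued --- the argument goes through.
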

Not every periodic $n$-nary sequence is a billiard sequence. We shall see that billiard sequences fall into a ``small'' subset of $\{0,1,\dots, n-1\}^{\infty}$. Denote by $\mathcal{T}$ the space of all billiard trajectories $t\in\Delta$, $\mathcal{T}_s$ the space of all billiard sequences.
Since we deal mostly with with unfoldings, we will abuse notation and write $\mathcal{T}$ for the space of all rays in $\R^2$
\begin{definition}
	A billiard trajectory $t\in\mathcal{T}$ \emph{singular} if $t$ is a folding of a ray $\tau\in\R^2$ which passes through a vertex of $\Lambda$.
\end{definition}
Singular trajectories have no well-defined billiard sequence but play a key role in the classification of translation classes as follows. For any vertex $\lambda\in\Lambda$, the continuous family of translated billiard sequences $\tau+s\lambda, s\in[0,1]$ remains invariant until $\tau$ passes through a vertex of $\Lambda$. Since the billiard strings between any pair of rays in $\tau+\Lambda$ differ by cyclic permutation and $\R^2/\Lambda$ is compact, the number of elements in $[\tau]$ is finite for any periodic $\tau$. Notice that for any nonsingular $\tau$, a small perturbation of $\tau$ leaves $\tau_s$ invariant. Hence, $\tau_s$ encodes the direction$v_0$ but not position $p_0$ of the orbit, prompting the next definition. 
\begin{definition}
	A \emph{translation class} $[\tau]\subset \mathcal{T}$ to be the set of all billiard strings that can be brought into $\tau$ by translation and $G$-equivalence.
\end{definition}
Notice that if $\tau_1,\tau_2\in [\tau]$ are two orbits in the same translation class then the corresponding foldings of $t_1,t_2$ are parallel at some interval during its orbit. The remainder of the paper will be spent classifying billiard strings in each of the four tables and then we shall organize them into translation classes.
%%%%%%%%%%%%%%%%%%%%%%%%%%%%%%%%%%%%%%%%%%%%%%%%%%%%%%
%%%%%%%%%%%%%%%%%%%%%%%%% Calculations
%%%%%%%%%%%%%%%%%%%%%%%%%%%%%%%%%%%%%%%%%%%%%%%%%%%%%%%
\section{Classification of digit billiard strings for each of the four types}
\label{sec:calcs}
\subsection{Unfolding.}
In each of the four tables, $\Delta$ unfolds along its edges of reflection to create a tesselation $\Lambda\subset\R^2$ by reflected copies of $\Delta$. The symmetry group $G=S(\Lambda)$ of $\Lambda$ is an affine Weyl group. $G$ may be described by compositions a point group $K\subset G$ and translations $V(\Lambda)$, with $\R^2/S(\Lambda) = \Delta$. The $H$-action on $\Delta$ permutes the edge labels $0,1,\ldots, n-1$ which induces the obvious $H$-actions on $\Lambda$, $\mathcal{T}$ and $\mathcal{T}_s$ respectively. The vertices of $\Lambda$ are generated by two independent directions $v_1,v_2\in \R^2$. The following observation is key. There are two essential translation subgroups $H\unlhd H_u$ of $G$, one each for the labelled and unlabelled edges of $\Lambda$. Notice that $H$ has finite index in $H_u\cong V(\Lambda)$, and that $\R^2/H$ is a torus, tiled by $|H_u:H|$ labelled copies of $\Delta$ (see figures ,\ref{fig:MSA2},\ref{fig:MSB2},\ref{fig:MSD2} and \ref{fig:MSG2} for examples)). We adopt the convention that $\tau$ is a convex combination of the directions $v_1,v_2$:
\begin{equation}
\label{equ:initdir}
v_0=cv_1+(1-c)v_2, a\in [0,1]
\end{equation}
Then $\tau$ may be viewed a winding line $\tau/H$ on the torus $\R^2/H$. Therefore $t,\tau$ and $t_s=\tau_s$ are periodic if and only if $c$ is rational. Write $c= q/n$ where $q,n$ are coprime integers with $0<q\leq n/2$. Thus, our initial direction has the form
\begin{equation}
\label{equ:initdir2}
v_0=n^{-1}(qv_1+(n-q)v_2)
\end{equation} For convention, label $\Delta$ and select $p_0$ so that $\tau$ is a nonsingular ray starting on edge $0$ and each term in the sequence
\begin{equation}
\label{equ:stepseq}
B=p_0,p_0+v_0,p_0+2v_0,p_0+3v_0,\ldots
\end{equation}
generates a new digit of $t_s=\tau_s$. Assuming that no edge lies between consecutive terms of $B$, $t_s=\tau_s$ may be constructed from the sequence $B$ of points in \eqref{equ:stepseq}. As $v_1,v_2$ generate the vertices of $\Lambda$, a unit step in $v_1$ adds $j$ to the current digit (mod $n$) and a unit step in direction $v_2$ adds $i$ to the current digit (modulo the base). Therefore, the digits of $\tau_s$ are generated by the orbit of $kq/n \mod 1$, or $kq \mod n$, as $p_0$ may be taken to be negligably small $a>0$ in the $v_1$ coordinate. Thus the sequence \ref{equ:stepseq} generates the next digit by deciding whether the $k$th digit is $+i$ or $+j$ the $k-1$st as according to whether $a+kq \mod n < a+(k-1)q\mod n$. If the step from $p_0+(k-1)v_0$ to $p_0+kv_0$ exceeds the next highest multiple of $v_1$ then the $k$th digit is $+i$ the $k-1$st digit of $\tau_s$, otherwise it is $+l$.  Thus, we examine the forward orbit $a+kq \mod n, k\in\N$, then show how the four cases $A_2,B_2,D_2,G_2$ are simply a selection of the generation rule $i,j$ outlined above. 
\subsection{q-cycles}
\label{sec:qcycles}
For this section, let $n,q\in\N,\ n>q$ be fixed but arbitrary. Under the assumptions of \S\ref{sec:calcs} the terms of the forward orbit $a+kq \mod n, k\in\N$ determine the billiard sequence $\tau_s$. Obviously this sequence repeats after $n$ steps as to be expected by a periodic $\tau$. By long division, $n=bq+r$ for some $b>0$, $0\leq r<b$. 
\begin{definition}
For a remainder $0\leq a < q$ and integer $n$ there is a largest $k\in \N$ satisfying $a+kq<n$. Define the $q$-cycle (or simply cycle) of $a$ in $n$, $C_a$, to be the arithmetic progression $a,a+q,a+2q,\ldots,a+kq$. Call $a$ the \emph{minimal element} of $C_a$. Define the $q$-cycle length to be $|C_a|=k$ and if $|C_a|=k$ call $a+kq$ the maximal element of $C_a$. 
\end{definition}
It is evident from this definition that the set of distinct $q$ cycles of $n$, $\mathcal{C}_{q,n}$ (or $\mathcal{C}$ when $q,n$ are implied), form an ordered partition of $\Z/q\Z$. The minimal elements of $C_a$ appear consecutively within this list as an arithmetic progression of length $|C_a|$ that begins with minimal element $a$. Obviously, the first $q$-cycle is $C_0$ with minimal element $0$. 
Since the minimal element $a$ must satisfy $0\leq a<q$ there are $q$ $q$-cycles in total. The remainder $r$ determines cycle lengths: the cycles $C_0,\ldots,C_{r-1}$ have length $b+1$, whereas the cycles $C_r,\ldots ,C_{q-1}$ have length $b$. Notice that there are only two possible cycle length and the sum of the lengths over all $q$-cycles is $(b+1)r+b(q-r)=bq+r=n$. The next lemma expands on the structure of $q$ cycles. 
\begin{lemma}
\label{lem:minelet}
If $C_a$ is the $q$-cycle with maximal element $a+kq$, then $x:=a+(k+1)q-n$ is the minimal element of the cycle $C_x$. Specifically,
 \[
   x= \left\{
     \begin{array}{lr}
       a+q-r, &  1<a<r\\
       a-r, &  r\leq a < q
     \end{array}
   \right.
 \]
\end{lemma}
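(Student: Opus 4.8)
The plan is to reduce everything to the Euclidean data $n = bq + r$ with $0 \le r < q$ already fixed above, together with the cycle-length count recorded just before the lemma: the cycles $C_0,\dots,C_{r-1}$ each consist of $b+1$ terms, while $C_r,\dots,C_{q-1}$ each consist of $b$ terms. Since $C_a = a, a+q, \dots, a+kq$ has maximal element $a+kq$, its number of terms is $k+1$; hence the count converts directly into a value of the index $k$ in each regime, namely $k = b$ when $0 \le a < r$ and $k = b-1$ when $r \le a < q$. This dichotomy is the whole skeleton of the argument, so I would state it first and justify it from the defining largest-$k$ inequalities $a + kq < n \le a + (k+1)q$, checking in each range that the stated $k$ is the unique integer satisfying them (for $0\le a<r$ one uses $0< r-a\le r<q$, and for $r\le a<q$ one uses $-q<r-a\le 0$).

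With $k$ pinned down, the remainder of the proof is pure substitution into $x = a + (k+1)q - n$. In the first regime I substitute $k=b$ and $n = bq+r$ to get $x = a + (b+1)q - (bq+r) = a + q - r$; in the second I substitute $k = b-1$ to get $x = a + bq - (bq+r) = a - r$. These are exactly the two formulas claimed, so no further algebra is required.

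The one genuine thing to verify is that each resulting $x$ is actually a minimal element, i.e. that $0 \le x < q$; once this holds, $x$ is by definition the least element of the unique cycle passing through it, which is $C_x$, so the identification is automatic (the minimal elements being precisely the residues $0,1,\dots,q-1$). For $0 \le a < r$ I would bound $0 < q - r \le x = a + q - r < q$ using $0 \le a < r$ and $r < q$; for $r \le a < q$ I would bound $0 \le x = a - r < q - r < q$ using $r \le a < q$. Both land in $[0,q)$. This is also the natural place to clean up the apparent off-by-one in the printed hypothesis: the first formula in fact holds on the full range $0 \le a < r$ (in particular at $a = 0,1$), the sharp boundary being forced by these inequalities rather than by any restriction to $1 < a$.

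I expect no serious obstacle; the only point demanding care is the bookkeeping between the number of terms ($k+1$) and the index $k$ of the maximal element, since the definition sets $|C_a| = k$ while the preceding paragraph counts terms. Getting this correspondence right is precisely what decides whether one substitutes $b$ or $b-1$, and a single slip there would corrupt both formulas simultaneously. Once the term-count is correctly translated into $k$, the lemma follows immediately.
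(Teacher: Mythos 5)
Your proposal is correct and follows essentially the same route as the paper: a case split on whether $0\le a<r$ (so the maximal element is $a+bq$) or $r\le a<q$ (maximal element $a+(b-1)q$), substitution of $n=bq+r$ into $x=a+(k+1)q-n$, and verification that $x\in[0,q)$ so that $x$ is indeed a minimal element. Your observation that the stated hypothesis ``$1<a<r$'' should read $0\le a<r$ is also consistent with the paper, whose own proof treats the full range $0\le a<r$.
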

\begin{proof}
The proof is evident from routine calculations which are split into cases based on the length of $|C_a|$:
\begin{case}
$0\leq a < r$.
\end{case}
Since $|C_a|=b+1$, $a+bq$ is the $b+1$st (maximal) element of the $q$-cycle, whence $n\leq a+(b+1)q= a+bq+q$. Substituting $bq=n-r$ yields $n\leq a+(n-r)+q$, or $0\leq a+q-r$. From the assumption $a < r$ we get $a+q-r<q$. Thus, $a+(b+1)q-n=a+q-r$ is the minimal element of $C_{a+q-r}$.
\begin{case}
$r \leq a < q$. 
\end{case}
Since $|C_a|$ has length $b$, $a+(b-1)q$ is the maximal element of the $q$-cycle $n\leq a+bq$. As $bq=n-r$, $n\leq a+(n-r)$, so $0\leq a-r$. By assumption, $a<q$, so $a-r<q-r<q$. Thus $a+bq-n=a-r$ is the minimal element of $C_{a-r}$.
\end{proof}
From a $(q,n)$-cycle $\mathcal{C}_{n,q}$ one may produce a digit sequence by replacing the cycle $C_x\in \mathcal{C}_{n,q}$ with $|C_x|$ $+i$'s followed by one $+j$ as determined by the $q$-cycle $x,x+q,x+2q,\ldots x+(|C_x|-1)q$. We call this the ``$+i$ within, $+j$ between'' rule.  Starting with digit zero, the $q$-cycle rule shows us how to generate digits of an $m$-ary string.
\begin{thm}
\label{thm:Umbledone}
Let $d$ be a periodic string $d\in\{0,\ldots,m\}^{\infty}$. The following are equivalent
\begin{enumerate}
\item $d$ is a periodic nonsingular billiard sequence, i.e., the digits of $d=\tau_s$ for some $\tau\in \mathcal{T}$ 
\item The digits of $d$ are constructed from a $(q,n)$ string $\mathcal{C}$ for some $q,n$ coprime, according to the ``$+i$ within $+j$ between'' rule. 
\end{enumerate}
\end{thm}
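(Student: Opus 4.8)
The plan is to prove the two implications separately, using the unfolding framework established in Section~\ref{sec:calcs} as the bridge between the geometric object $\tau$ and the combinatorial object $\mathcal{C}$. For the direction $(1)\Rightarrow(2)$, I would start from a periodic nonsingular $\tau\in\mathcal{T}$ and invoke the normalization from \S\ref{sec:calcs}: the periodicity of $\tau_s$ forces the slope parameter $c$ to be rational, so I may write $c=q/n$ with $\gcd(q,n)=1$ and $0<q\le n/2$, giving the initial direction \eqref{equ:initdir2}. The key computation is already sketched in the text: stepping through the sequence $B=p_0+kv_0$ in \eqref{equ:stepseq}, the digit produced at step $k$ is determined entirely by the residue $a+kq\bmod n$, and whether a step increments by $+i$ or $+j$ depends on whether $a+kq\bmod n < a+(k-1)q\bmod n$ (i.e.\ whether the orbit wrapped around past a multiple of $v_1$). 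I would make precise that this ``wrap-around'' event happens exactly at the maximal element of a $q$-cycle, so that the forward orbit $\{a+kq\bmod n\}$ traverses the cycles $C_x\in\mathcal{C}_{n,q}$ in the order dictated by Lemma~\ref{lem:minelet}, emitting $|C_x|$ copies of $+i$ followed by a single $+j$ at each cycle boundary. This is exactly the ``$+i$ within, $+j$ between'' rule, yielding~(2).

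For the converse $(2)\Rightarrow(1)$, I would reverse this construction: given coprime $q,n$ and the resulting $(q,n)$-string $\mathcal{C}$, I define $v_0$ by \eqref{equ:initdir2} and choose a generic (nonsingular) basepoint $p_0$ on edge $0$ with an arbitrarily small offset $a>0$ in the $v_1$-coordinate, then fold the ray $\tau=\{p_0+kv_0\}$ back into $\Delta$. The work here is to verify that the digit string $\tau_s$ actually produced by this geometric $\tau$ matches the string generated combinatorially from $\mathcal{C}$ by the rule — but this is precisely the content of the computation just performed, read backwards, together with the observation that the choice of which reflection subgroup $H\unlhd H_u$ governs the edge labels is what specializes the generic rule to the particular increments $(i,j)$ for each of $A_2,B_2,D_2,G_2$. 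Nonsingularity of $\tau$ is guaranteed for all but countably many offsets $a$, so a valid $\tau$ exists.

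The main obstacle I anticipate is the bookkeeping in the forward direction: rigorously identifying the increment event ``$a+kq\bmod n<a+(k-1)q\bmod n$'' with ``$a+(k-1)q$ is the maximal element of its $q$-cycle,'' and confirming that the successor relation between minimal elements given by Lemma~\ref{lem:minelet} reproduces the \emph{ordered} partition structure asserted in the definition of $\mathcal{C}_{n,q}$. One must be careful that the cyclic traversal of residues visits each $q$-cycle exactly once before returning to $C_0$, which uses $\gcd(q,n)=1$ to ensure the orbit of $0$ under $+q\bmod n$ together with the cycle-to-cycle map exhausts $\mathbb{Z}/n\mathbb{Z}$ with the correct period. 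A secondary subtlety is the assumption flagged in \S\ref{sec:calcs} that ``no edge lies between consecutive terms of $B$''; I would either absorb this into the normalization of $v_0$ or remark that it can always be arranged by the convexity of the fundamental domain, since each unit step in $v_1$ or $v_2$ crosses exactly one labelled edge.

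Finally, I would close the loop by noting that the equivalence is stated at the level of strings up to the starting digit (``starting with digit zero''), so the argument should be phrased modulo the cyclic-permutation and $G$-equivalence already discussed in the translation-class material of \S\ref{sec:prelims}; this ensures that the map $\tau\mapsto\tau_s$ and the combinatorial construction $\mathcal{C}\mapsto d$ are mutually inverse on the appropriate equivalence classes, completing the proof of both implications.
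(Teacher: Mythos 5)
Your proposal is correct and takes essentially the same route as the paper's own proof: the forward implication identifies the digit increments of the forward orbit $a+kq \bmod n$ with the ``$+i$ within, $+j$ between'' traversal of the $q$-cycle structure $\mathcal{C}_{q,n}$, and the converse uses Lemma~\ref{lem:minelet} to reconstruct the direction $v_0$ as in \eqref{equ:initdir2} and then selects $p_0$ with a small $v_1$-offset so that the ray is nonsingular and reproduces the same digit string. The additional care you flag (wrap-around events at maximal elements, coprimality ensuring the traversal exhausts $\Z/n\Z$, and the assumption that no edge lies between consecutive terms of $B$) only makes explicit bookkeeping that the paper's proof leaves implicit.
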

In particular, the $(q,n)$ $\mathcal{C}$ provides the rule for constructing a billiard string $\tau_s$ in direction $v_0=n^{-1}(qv_1+(n-q)v_2)$ \ref{equ:initdir2}.     
\begin{proof}
$(1)\implies (2)$. Much of the argument collects previous observations. An $m$-ary Billiard sequence is constructed from $\tau = p_0+tv_0$ where
\[v_0=n^{-1}(qv_1+(n-q)v_2)\] where $q,n$ coprime as in \ref{equ:initdir2}. By \ref{equ:stepseq} the digits of $d$ are produced by the sequence of numerators in the $v_1$ coordinate of
\[p_0,p_0+v_0,p_0+2v_0,p_0+3v_0,\ldots\] mod $n$. These numerators precisely fall into the $(q,n)$-cycle structure $\mathcal{C}_{q,n}$.  The ``$+1$ within $-1$ between'' rule for $q$-cycles was defined as to make digit strings coincide.\\
$(1)\implies (2)$ Suppose that the $m$-ary digit string constructed from the $(q,n)$-cycle.  By Lemma \ref{lem:minelet}, $x+(|C_x|)q \mod n$ is the minimal element of the next cycle, from which we can inductively build the sequence of numerators $v_1$. Since $v_0$ is a convex combination of $v_1$ and $v_2$, $v_0$ as in \ref{equ:initdir2} are determined. As $V(\Lambda)$ is discrete, we may choose $p_0=av_1+(1-a)v_2$ with small enough $v_1$ coordinate so that
\begin{itemize} 
\item the correponding $q$-cycles starting with minimal elements $0$ or $a$ produce the same digit sequence in the ``$+i$ within $-j$ between'' rule, 
\item the ray $p_0+tv_0$ is nonsingular, and 
\item the sequence
\[p_0,p_0+v_0,p_0+2v_0,p_0+3v_0,\ldots\] produces the same digit string $d$ as the two $(q,n)$-cycles.
\end{itemize}
The theorem is proved.
\end{proof}
The essence of the reseach question \ref{equ:researchq} is method by which one can verify
whether a digit periodic string $d$ is a billiard sequence. By theorem \ref{thm:Umbledone}, this amounts to showing the digits fall into a ``$+i$ within $+j$ between'' construction from some $(q,n)$ cycle. We begin with a fairly brute force algorithm, then provide a more elegant means of detecting billard strings in section \S\ref{sec:strqcyc}. While $q,n$ are not known from the outset, we a priori write $n=bq+r$ and describe an algorithm to determine whether $d$ is a billiard string by producing $n,b,q,$ and $r$. We keep the assumption that $q<n/2$ so there are more +i's that +j's.  Recall that there are $q$ $q$-cycles and hence (at least) $q$-locations within digit string $d$ where a $+j$ increment occurs. So there must be $n-q$ locations within the first $n$ where a digit $+i$ occurs. As always we assume the first digit of $d$ is $0$. After $n$ digit changes The $n+1$st digit is $1(n-q)+(-1)q=n-2q=-2q \mod n$. If the $n+1$st digit is $0$ then the digit string repeats and $n$ emerges as the period. If $n-2q\neq 0 \mod n$ then the pattern of digit changes repeats but with a new leading digit. In this way $d$ has period $mn$ (if the string is $m$-ary) where a left shift by $n$ digits of $d$ affects a $\pm k$ action on each digitdigit, where $\pm k$ is the $n+1$st digit since the rule $+i,+j$ rule repeats. Take the first $n$ digits in either case. Within the $q$-cycle $C_a$ with minimal element $a<n$, subsequent digits are $+i$ the preceding digit $\mod n$ until $a+k'q>n$ then the subsequent digit is $+j$ the preceeding digit $\mod n$. The length $b+1$ therefore appends $b$ digits in $+i$ succession $\mod n$ followed by one digit $+j$, and $a+k'q \mod n$ is the minimal element of the next $q$-cycle by Lemma \ref{lem:minelet}.
To summarize the algorithm:
\begin{enumerate}
\item Find the period $p$ of $d$
\item If the string is $m$-ary and $m|p$ check whether the +i,+j pattern repeats p/m times. If so, consider the first $p/m$ digits and repeat step $2$ until both the $+i,+j$ rule and the digits themselves repeat.
\item Find the two cycle lengths $b,b+1$
\item Use long division to find $q,r$
\item Verify that the lengths of the $q$ cycles fall into the orbit of $-kr \mod q$. With exactly $r$ blocks of length $b+1$ and $q-r$ blocks of length $b$.
\item If yes to all the above, then $d$ is a given by a billiard orbit.  
\end{enumerate}
%%%%%%%%%%%%%%%%%%%%%%%%%%%%%%%%%%%%%%%%%%%%%%%%%%%%%%
%%%%%%%%%%%%%%%%%%%%%%%%%%%%%%%%%%%%%%%%%%%%%%%%%%%%%%
%%%%%%%%%%%%%%%%%%%%%%%%%%%%%%%%%%%%%%%%%%%%%%%%%%%%%%
\subsection{Structure of translation classes}
\label{sec:strqcyc}
Let $n=bq+r, 0\leq r <q$ be as above. Theorem \ref{thm:Umbledone} shows that the digit sequences generated by rational nonsingular billiard orbits and $(q,n)$-cycles are equal up to suitable choice of $p_0$. However a billiard sequence determines only the direction and not the initial position. It is when a translated $\tau$ meets a vertex (i.e., bcomes singular) that we see a change in the digits sequence. So we seek to understand the family billiard sequences within a translation class $[\tau]\subset\mathcal{T}$. There are two main results in this section: 
\subsubsection{Recursive construction of billiard sequences in terms of the division algorithm.} Consider the sequence $C_{x_1}, C_{x_2},\ldots, C_{x_{q-1}}$ of $q$-cycles of $\mathcal{C}$ which partitions the orbit $0,q,2q,\ldots,(n-1)q$ mod $n$. Within this progression, the maximal element of $C_{x_i}$ is followed by the minimal element $C_{x_{i+1}}$. By Lemma \ref{lem:minelet} (with $a=0$) the $k$th minimal element is $cq-kr$ for some $c\in \N$. Modulo $q$, to the orbit of 
\[0,-r,-2r,\ldots -(q-1)r \mod q\]
records the minimal elements within $\mathcal C$ in order.
In other words, the $q$ cycles of fall into the sequence 
\[\mathcal{C}=(C_{0}, C_{-r},\ldots, C_{-(q-1)r}).\]
The lengths of each $q$ cycle is determined by the residue of $-kr$ mod $q$. This affords a recursive construction of $\mathcal{C}_{n,q}$ which we record as a theorem.
\begin{thm}
\label{thm:strED}
Write $n=bq+r$. The $(n,q)$-cycle $\mathcal{C}_{n,q}$ may be constusted from $\mathcal{C}_{q,-r}$ by replacing, for each element $y$ in the $-r$-cycle $C_{-kr}\in \mathcal{C}$ the $q$ -cycle $C_{y}$ of $n$. Therefore the digit strings built from $(n,q)$-cycles may be expressly constructed from $\mathcal{C}_{q,-r}$.
\end{thm}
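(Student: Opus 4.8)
The plan is to obtain the theorem by iterating Lemma~\ref{lem:minelet}, reading its conclusion as the statement that the ``next minimal element'' map is translation by $-r$ modulo $q$, and then using coprimality to identify the resulting orbit with a single $(-r)$-cycle of $q$.

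First I would pin down the traversal order precisely. The forward orbit $0,q,2q,\ldots,(n-1)q \bmod n$ visits every residue mod $n$ exactly once and decomposes into the $q$-cycles of $n$ as contiguous runs: each cycle $C_a=a,a+q,\ldots,a+kq$ is an uninterrupted block ending at its maximal element $a+kq$. By Lemma~\ref{lem:minelet} the following orbit point $a+(k+1)q$ reduces mod $n$ to $x=a+(k+1)q-n$, the minimal element of the next cycle, and both cases of the lemma give $x\equiv a-r \pmod q$ with $0\le x<q$. Hence the successor map on cycles is exactly $C_a\mapsto C_{a-r \bmod q}$.

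Next I would iterate this successor map from the initial cycle $C_0$, so that the minimal elements appear in the order $0,-r,-2r,\ldots \pmod q$. Because $\gcd(q,n)=1$ and $n=bq+r$ force $\gcd(q,r)=1$, the residue $-r$ generates $\Z/q\Z$; these $q$ residues are therefore distinct, exhaust $\{0,\ldots,q-1\}$, and return to $0$ after exactly $q$ steps. This orbit is by definition the single $(-r)$-cycle of $q$ that constitutes $\mathcal{C}_{q,-r}$, listed as $0,-r,\ldots,-(q-1)r$. Replacing each element $y$ of this progression by the $q$-cycle $C_y$ of $n$, in order, then reconstructs the ordered list $\mathcal{C}_{n,q}=(C_0,C_{-r},\ldots,C_{-(q-1)r})$, which is exactly the asserted construction. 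For the claim about digit strings I would observe that the length of each $C_y$ (namely $b+1$ when $y<r$ and $b$ otherwise) is determined by the residue $y=-kr \bmod q$, hence by the position of $y$ within $\mathcal{C}_{q,-r}$; feeding these ordered lengths into the ``$+i$ within, $+j$ between'' rule of Theorem~\ref{thm:Umbledone} then produces the digit string directly from $\mathcal{C}_{q,-r}$.

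The hard part is light here, and is confined to bookkeeping: I must check that $a+q-r$ (when $a<r$) and $a-r$ (when $r\le a<q$) both lie in $[0,q)$ and both equal $a-r \bmod q$, so that the successor map is uniformly translation by $-r$, and I must verify that it is precisely the coprimality of $q$ and $r$---the only place the hypothesis $\gcd(q,n)=1$ enters---that upgrades the orbit of $-r$ into one full cycle rather than a disjoint union of shorter cycles. Once these two points are settled, the theorem is a direct transcription of the iterated lemma.
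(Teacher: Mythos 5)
Your argument is essentially the paper's own: the justification for Theorem \ref{thm:strED} is the paragraph immediately preceding it, which, exactly as you do, iterates Lemma \ref{lem:minelet} to conclude that the successive minimal elements of $\mathcal{C}_{n,q}$ trace the orbit $0,-r,-2r,\ldots,-(q-1)r \bmod q$, yielding the ordering $\mathcal{C}=(C_{0},C_{-r},\ldots,C_{-(q-1)r})$ with the cycle lengths read off from the residues $-kr \bmod q$; your explicit appeal to $\gcd(q,r)=1$ to guarantee a full orbit of length $q$ makes precise a point the paper leaves implicit. One correction, though: the orbit $0,-r,\ldots,-(q-1)r$ is \emph{not} ``the single $(-r)$-cycle of $q$.'' In the paper's sense $\mathcal{C}_{q,-r}$ is itself an ordered family of $q-r$ cycles, namely the non-wrapping runs of that orbit; for instance, with $(n,q)=(23,5)$ and $r=3$ one has $-r\equiv 2 \pmod 5$ and $\mathcal{C}_{5,2}=\bigl((0,2,4),(1,3)\bigr)$, two cycles whose concatenation is your list $0,2,4,1,3$. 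Since the substitution $y\mapsto C_y$ acts element-wise on that concatenation, the slip does not invalidate your derivation of the stated construction, but it should be fixed: the nontrivial cycle decomposition of $\mathcal{C}_{q,-r}$ --- precisely not a single cycle --- is what allows the construction to be iterated down the Euclidean algorithm, which is the point of the theorem.
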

In other words, $(n,q)$ cycles may be constructed according to the division algorithm
%%%%%%%%%%%%%%%%%%%%%%%%%%%%%%%%%%%%%%%%%%%%%%%%%%%%%%
%%%%%%%%%%%%%%%%%%%% permut rel %%%%%%%%%%%%%%%%%%%%%%%%%%%
%%%%%%%%%%%%%%%%%%%%%%%%%%%%%%%%%%%%%%%%%%%%%%%%%%%%%%
\section{Relation between elements in a translation class $[\tau]$} 
\label{sec:translation} 
The next lemma describes the lengths of $q$-cycles in terms of the consecutive minimal elements.  
\begin{lemma}
\label{lem:strlen}
Let $n=bq+r, 0\leq a\leq q-1$ as above. Then
 \[
   |C_{a+1}| = \left\{
     \begin{array}{lr}
       |C_a|+1, &  a=q-1\\
       |C_a|-1, &  a= r-1\\
       |C_a|, &  \text{otherwise}
     \end{array}
   \right.
 \]
\end{lemma}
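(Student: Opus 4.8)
The plan is to derive the lemma directly from the cycle-length dichotomy already established in the preliminaries: the lengths $|C_a|$ take exactly two values differing by one, the larger value being attained precisely on the cycles $C_0,\dots,C_{r-1}$ and the smaller on $C_r,\dots,C_{q-1}$. Since this exhibits $|C_a|$ as a step function of $a$ with a single downward jump at the threshold $a=r$, the increment $|C_{a+1}|-|C_a|$ is determined entirely by whether $a$ and $a+1$ lie on the same side of that threshold, together with the wraparound at the top of the index range. Working with the differences rather than the raw values has the pleasant side effect of sidestepping the minor convention question of whether a ``length'' counts the exponent $k$ or the number of terms $k+1$: both conventions differ by a global additive constant, so they yield identical increments.

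First I would fix the convention that the index of $C_{a+1}$ is read modulo $q$, so that when $a=q-1$ the successor is $C_q=C_0$; this is forced by the fact that minimal elements are exactly the residues $0,1,\dots,q-1$ and that the cyclic order of the $q$-cycles returns to $C_0$. With this in hand I would partition the range $0\le a\le q-1$ into the four subcases $a\le r-2$, $a=r-1$, $r\le a\le q-2$, and $a=q-1$, and read off the two relevant lengths from the dichotomy in each. In the two interior subcases both $C_a$ and $C_{a+1}$ sit in the same length class, giving $|C_{a+1}|=|C_a|$; at $a=r-1$ the length drops by one as we cross from the larger class into the smaller, giving $|C_{a+1}|=|C_a|-1$; and at $a=q-1$ the successor $C_0$ lies in the larger class while $C_{q-1}$ lies in the smaller, giving $|C_{a+1}|=|C_a|+1$. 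These four subcases exhaust the range and collapse onto the three cases in the statement.

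The only genuinely delicate point is the boundary bookkeeping, and I expect the main obstacle to be stating the degenerate hypotheses cleanly rather than any substantive computation. If $r=0$ (equivalently $q\mid n$) all cycles share a common length, the case $a=r-1$ becomes vacuous, and the predicted $+1$ jump at $a=q-1$ fails; the case $q=1$ collapses the index range altogether. I would therefore record at the outset that we operate under the standing assumption $\gcd(q,n)=1$ with $2\le q<n$, which forces $1\le r<q$ and guarantees both that each length class is nonempty and that the wraparound jump at $a=q-1$ is genuine. Once these edge cases are excluded, what remains is a finite case check against the established step function, requiring no further input.
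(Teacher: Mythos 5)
Your proof is correct and follows exactly the route the paper intends: Lemma \ref{lem:strlen} is stated without a proof environment precisely because it is an immediate case check against the dichotomy established in \S\ref{sec:qcycles} (the cycles $C_0,\dots,C_{r-1}$ have length $b+1$ while $C_r,\dots,C_{q-1}$ have length $b$), which is the argument you give. Your extra care at the boundaries --- reading the index of $C_{a+1}$ modulo $q$ so that $C_q=C_0$, and excluding the degenerate cases $r=0$ and $q=1$ where the claimed jump at $a=q-1$ would fail --- is bookkeeping the paper silently glosses over, and your observation that only increments matter also neatly sidesteps the paper's internal inconsistency over whether $|C_a|$ counts $k$ or $k+1$ terms.
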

\begin{cor}
With $n=bq+r, 0\leq a\leq q-1$ as above, and $0<k<r$.  
 \begin{equation}
   |C_{a+k}| = \left\{
     \begin{array}{lr}
       |C_a|+1, &  q-k\leq a\leq q-1\\
       |C_a|-1, &  r-k\leq a\leq r-1\\
       |C_a|, &  \text{otherwise}
     \end{array}
   \right.
\end{equation}
\end{cor}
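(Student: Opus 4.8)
The plan is to obtain the Corollary by iterating Lemma~\ref{lem:strlen} $k$ times and summing the one-step differences, reading the index $a+k$ cyclically (write $C_m$ for $C_{m\bmod q}$ whenever $m\geq q$). With this convention the telescoping identity
\[
|C_{a+k}|-|C_a|=\sum_{j=0}^{k-1}\bigl(|C_{a+j+1}|-|C_{a+j}|\bigr)
\]
holds, and Lemma~\ref{lem:strlen} evaluates each summand: the increment $|C_{m+1}|-|C_m|$ is $+1$ when $m\equiv q-1\pmod q$, is $-1$ when $m\equiv r-1\pmod q$, and is $0$ otherwise. Hence the total difference is exactly the signed number of length-change boundaries that the window $\{a,a+1,\dots,a+k-1\}$ sweeps across.

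First I would use the hypothesis $0<k<r<q$ to note that this window consists of $k$ consecutive residues and so meets each residue class modulo $q$ at most once. Therefore the sum collapses to two indicators: it contributes $+1$ precisely when $q-1$ lies in the window and $-1$ precisely when $r-1$ lies in the window. Converting membership into inequalities on the base point yields $q-1\in\{a,\dots,a+k-1\}\pmod q \iff q-k\leq a\leq q-1$ and $r-1\in\{a,\dots,a+k-1\}\iff r-k\leq a\leq r-1$, where $r-k>0$ guarantees the second interval needs no wrap-around and both intervals sit inside $[0,q-1]$. These are the two nontrivial cases of the statement, and every remaining $a$ gives $|C_{a+k}|=|C_a|$.

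The step demanding the most care --- and the only place the bound $0<k<r$ is genuinely exploited --- is verifying that the two exceptional intervals $[q-k,q-1]$ and $[r-k,r-1]$ are disjoint, so that no single $a$ receives both a $+1$ and a $-1$. Geometrically this is the claim that a window of $k$ consecutive residues cannot contain both boundary indices $q-1$ and $r-1$ at once; I would establish it by comparing $k$ with the two cyclic gaps between the boundaries, namely the block $C_r,\dots,C_{q-1}$ of $q-r$ cycles of length $b$ and the block $C_0,\dots,C_{r-1}$ of $r$ cycles of length $b+1$. The constraint $k<r$ immediately excludes the wrap-around configuration, and the non-wrapping configuration must be ruled out by the same gap comparison. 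Pinning down this disjointness precisely, together with the bookkeeping for indices $a+k\geq q$, is the main obstacle; once it is settled the trichotomy is immediate from the telescoped indicator count.
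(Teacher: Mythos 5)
Your telescoping reduction is correct and is in fact the intended route: the paper states this Corollary with no proof at all, as an immediate iteration of Lemma \ref{lem:strlen}, and your identity (with indices read mod $q$)
\[
|C_{a+k}|-|C_a|=\mathbf{1}\bigl[\,q-1\in W\,\bigr]-\mathbf{1}\bigl[\,r-1\in W\,\bigr],
\qquad W=\{a,a+1,\dots,a+k-1\},
\]
is the whole content. Your translation of the two memberships into the intervals $[q-k,q-1]$ and $[r-k,r-1]$ is also right, as is the observation that $k<r$ excludes the wrap-around coincidence, since the cyclic distance from $q-1$ forward to $r-1$ is $r$.

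However, the step you explicitly defer --- disjointness of the two exceptional intervals --- is a genuine gap, and it cannot be closed: the claim is false. The forward gap from $r-1$ to $q-1$ is only $q-r$, so whenever $q-r+1\le k\le r-1$ (nonempty as soon as $2r\ge q+2$) the window $W$ can contain both boundaries. Concretely, take $n=14$, $q=5$, so $b=2$, $r=4$, with cycles $C_0,\dots,C_3$ of length $3$ and $C_4$ of length $2$; let $k=3$ and $a=2$. Then $W=\{2,3,4\}$ contains both $r-1=3$ and $q-1=4$, the $+1$ and $-1$ cancel, and indeed $|C_{a+k}|=|C_0|=3=|C_2|$ --- while the Corollary's first two cases, which overlap on $a\in\{2,3\}$ here, each predict a change of $\pm1$. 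So the obstacle you flagged is a defect in the paper's statement, not in your method: no ``gap comparison'' can establish disjointness in the regime $q-r<k<r$. What your telescoping actually proves is the corrected statement $|C_{a+k}|-|C_a|=\mathbf{1}[\,q-k\le a\le q-1\,]-\mathbf{1}[\,r-k\le a\le r-1\,]$, which coincides with the Corollary precisely when $k\le q-r$ (intervals disjoint) and gives $|C_{a+k}|=|C_a|$ on the overlap $q-k\le a\le r-1$ otherwise. Your proof becomes complete once you either restate the conclusion as this indicator identity or strengthen the hypothesis to $0<k\le\min(r,q-r)$.
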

Regarding $\tau_s$ within a specific translation class $[\tau]\in\mathcal{T}$ we revisit the $+1$ action on the $(n,q)$-cycles 
\[C_{0}, C_{-r},\ldots, C_{-(q-1)r}.\]
The last element of $C_{r-1}$ is $bq+r-1 =n-1$ and the first element of $C_{q-1}$ is $q-1=n-1+q \mod n$. The $q$ cycles $C_{r-1},C_{q-1}$ appear consecutively in $\mathcal C$. By lemma \ref{lem:strlen} the $+1$ action takes $C_{r-1},C_{q-1}$ to $C_{r},C_{0}$ exchanging their lengths. Consequently the $+1$ action on the $q$-tuple of lengths sends \[|\mathcal C| \to (|C_{1}|,|C_{-r+1}|\ldots,|C_{-(q-1)r+1}|).\]
This action stabilizes all but the $2$ components $C_{r-1},C_{q-1}$ transposing the lengths $|C_{r-1}|=b+1,|C_{q-1}|=b$ within $|\mathcal{C}|$. This is another way to see that $\mathcal{C}$ has exactly $q$ elements in any translation class $[\tau]$ since there are $q$ instances when the $+1$ action swaps pairs of adjacent $q$ cycles lengths. The next theorem is a far more elegant alternative to the brute force algorithm presented at the end of section \ref{sec:qcycles}.  
\begin{thm}
\label{thm:digstr}
A periodic digit string encodes a rational nonsingular billiard orbit in $\Delta$ if and only if it has a block pattern consisting of two lengths $b,b-1$ and has the property that some nontrivial power of a cyclic permutation applied to the list of block lengths differs by adjacent transposition.  
\end{thm}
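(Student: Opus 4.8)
The plan is to prove the theorem by translating the stated combinatorial condition into the language of $(q,n)$-cycles and then invoking Theorem \ref{thm:Umbledone}. Throughout, I read a periodic digit string $d$ as a cyclic concatenation of \emph{blocks}, where a block is a maximal run of $+i$ increments terminated by a single $+j$; let $L=(\ell_0,\ell_1,\ldots,\ell_{q-1})$ denote the cyclic list of block lengths and let $\sigma$ be the cyclic shift $\sigma(\ell_0,\ldots,\ell_{q-1})=(\ell_1,\ldots,\ell_{q-1},\ell_0)$, so that a power $\sigma^k$ records the effect of rereading $d$ from a later block boundary.

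For the forward implication, suppose $d$ encodes a nonsingular rational billiard orbit. By Theorem \ref{thm:Umbledone} the digits of $d$ are built from a $(q,n)$-cycle $\mathcal{C}$ via the ``$+i$ within, $+j$ between'' rule, so the blocks of $d$ are exactly the $q$-cycles $C_x$ and the block lengths are the $q$-cycle lengths. By the count in \S\ref{sec:qcycles} these take only the two values $b$ and $b+1$, differing by $1$; let $r$ be the number of long cycles, so $n=bq+r$. It remains to exhibit a nontrivial power of $\sigma$ acting as an adjacent transposition, and this is precisely the content of the $+1$-action analysis preceding the theorem: in the ordering $(C_0,C_{-r},\ldots,C_{-(q-1)r})$ the $+1$ action fixes every length except those of the consecutive cycles $C_{r-1},C_{q-1}$, whose lengths it interchanges (Lemma \ref{lem:strlen}). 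Since $\gcd(q,r)=\gcd(q,n)=1$, relabelling minimal elements $x\mapsto x+1$ is exactly a cyclic shift of $L$; I would pin down its magnitude by the short computation $-kr+1\equiv -(k-r^{-1})r \pmod q$, which identifies the $+1$ action with $\sigma^{k_0}$ for $k_0\equiv r^{-1}\pmod q$, a nontrivial power. Hence $\sigma^{k_0}(L)$ differs from $L$ by a single adjacent transposition, as required.

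For the converse, suppose $d$ has block lengths taking two values $b,b-1$ (a total of $q$ blocks, say $r$ of the longer length) and that $\sigma^{k}(L)$ differs from $L$ by one adjacent transposition for some $0<k<q$. I would reconstruct the candidate parameters $n=(b-1)q+r$ and note $\gcd(q,n)=\gcd(q,r)$. The crux is the following combinatorial lemma, which I would isolate and prove separately: \emph{if $P\subseteq\Z/q\Z$ with $|P|=r$ has the property that $P-k$ differs from $P$ by moving a single element to an adjacent position, then $\gcd(q,r)=1$ and $P$ is, up to rotation, the balanced set $\{\,j:jk^{-1}\bmod q<r\,\}$.} Granting the lemma, the positions of the long blocks of $d$ coincide, up to a rotation of $d$, with the pattern of minimal elements $0,-r,-2r,\ldots$ produced in \S\ref{sec:strqcyc}; thus $L$ is exactly the length profile of $\mathcal{C}_{q,n}$, and reading the increments $i,j$ off $d$ realizes it as the ``$+i$ within, $+j$ between'' string of $\mathcal{C}_{q,n}$. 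Theorem \ref{thm:Umbledone}, part $(2)\Rightarrow(1)$, then furnishes the billiard orbit.

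I expect the main obstacle to be the combinatorial lemma above, which drives the converse and is essentially a reformulation of the three-gap theorem and the characterization of balanced (Christoffel) cyclic words. The delicate point is showing that a single adjacent transposition relating $P$ to one of its shifts forces both coprimality and the exact balanced pattern: if $\gcd(q,r)=g>1$ the indicator of $P$ is $\sigma^{q/g}$-periodic and no nontrivial shift can differ from it in only two places, whereas for a non-balanced $P$ the symmetric difference $(P-k)\,\triangle\,P$ must span more than two positions. I would settle this by analyzing the induced action of multiplication by $k$ modulo $q$ on the $r$ long positions and arguing that a single adjacent transposition corresponds to exchanging two consecutive gaps of the associated three-gap partition, which forces the gaps to take only two consecutive values and hence yields the mechanical structure. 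Once this lemma is established, the remaining steps are bookkeeping already carried out in \S\ref{sec:qcycles}--\S\ref{sec:strqcyc}.
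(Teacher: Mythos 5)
Your forward implication coincides, in substance, with the paper's entire proof: the paper likewise reduces to the level of $q$-cycles via Theorem \ref{thm:Umbledone}, orders $\mathcal{C}=(C_0,C_{-r},\ldots,C_{-(q-1)r})$, uses Lemma \ref{lem:strlen} to see that the $+1$ action fixes all block lengths except the adjacent pair $|C_{r-1}|,|C_{q-1}|$, which it transposes, and identifies the $+1$ action with $\sigma^{p}$ where $-pr+1\equiv 0\pmod q$ --- the same computation as your $k_0\equiv r^{-1}\pmod q$, up to the direction in which the shift is read (the paper's closing phrase ``inverse of $-r$'' versus its own displayed congruence is the same sign ambiguity). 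The genuine divergence is the converse. The paper's proof stops after the forward computation and never argues that an arbitrary periodic string with two block lengths and the cyclic-shift/adjacent-transposition property arises from some $(q,n)$-cycle; Theorem \ref{thm:Umbledone} cannot be invoked until one produces coprime $q,n$ and verifies that the long blocks occupy the mechanical arrangement $0,-r,-2r,\ldots\pmod q$, and this is exactly the rigidity lemma you isolate. Your lemma is true, and the Christoffel/three-gap route is viable, so on this direction your plan is more complete than the paper's own proof; two points need repair, however. First, your coprimality step is circular as sketched: an arbitrary $P$ with $\gcd(q,r)=g>1$ need not be $\sigma^{q/g}$-periodic, so that argument assumes structure it must derive. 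A one-line displacement count does the job instead: if $P-k$ is an adjacent move of $P$, then $\sum_{x\in P-k}x=\sum_{x\in P}x-rk$ and also $\sum_{x\in P-k}x=\sum_{x\in P}x\pm 1$ in $\Z/q\Z$, whence $rk\equiv\mp 1\pmod q$, forcing $\gcd(r,q)=\gcd(k,q)=1$ and pinning $k\equiv\mp r^{-1}$, consistent with your $k_0$. Second, the balancedness half of the lemma remains a sketch, so your proposal is a plan rather than a finished proof precisely at the step the paper omits entirely. (Cosmetic: the statement's lengths $b,b-1$ versus the $b,b+1$ of \S\ref{sec:qcycles} is an inconsistency internal to the paper that both you and its proof silently normalize; nothing hinges on it.)
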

\begin{proof}
By \ref{thm:Umbledone} it suffices to prove the theorem at the level of $q$-cycles. Write
\[\mathcal C=(C_{0},C_{-r}\ldots,C_{-(q-1)r})\]
for this ordered partition of $kq \mod n$, indices taken mod $q$.
Those $C_k$ which satisfy $0\leq k < r$ have length $b+1$ while $r \leq k < q$ have length $b$.
Thus the $+1$ action on $\Z/n\Z$ sends 
\[0,q,2q\ldots, (n-1)q \mod n\] to 
\[1,q+1,2q+1\ldots, (n-1)q+1 \mod n\] 
and
\[0,-r,-2r,\ldots -(q-1)r\] 
to 
\[1,-r+1,-2r+1,\ldots -(q-1)r+1.\] 
Consequently, $+1$ acts by some power $\sigma^p$ of the cyclic permutation $\sigma$ on the ordered partition of $q$-cycles $\Z/n\Z$ by mapping
\[C=(C_{0},C_{-r}\ldots,C_{-(q-1)r})\]
to
\[C=(C_{1},C_{-r+1}\ldots,C_{-(q-1)r+1})\]
where $p$ is a solution to $-pr+1=0 \mod q$. In other words, $p$ is the multiplicative inverse of $-r$ in $\Z/q\Z$.
\end{proof}
Within each block the digits are constructed accordingto the $+i,+j$ rule allowing for a periodic  string to be checked by the following algorithm:
\begin{enumerate}
\item Find the period $p$
\item Check that $d$ has two blocks of lengths $b,b+1$ according to the ``consecutive $+i$ then $+j$ rule.
\item Consider the sequence of block lengths $|\mathcal{C}|$
\item Repeat the process for the +1 action on $d$, obtaining a second sequence of block lengths $\mathcal{C}'|$
\item If $|\mathcal{C}'|$ $|\mathcal{C}|$ differ by cyclic permutation, then $|\mathcal{C}|$ is some $(q,n)$ string of a billard sequence.  
\end{enumerate}
\section{Examples}
\label{sec:examples}
%%%%%%%%%%%%%%%%%%%%%%%%%%% A2 %%%%%%%%%%%%%%%%%%%%%%%%%%%%%
\subsection{$\Delta$ is an equilateral triangular table with Weyl group $A_2$}
Label the vertices of $\Delta$ as $x_0,x_1,x_2$ so that edge $i$ connects vertex $x_i$ to $x_{i+1}$ mod $3$. The coordinate directions $v_1 =x_1-x_0$ and $v_2=x_2-x_0$, are called rhombic coordinates in \cite{BU08} so that
\begin{equation}
\label{equ:initdir}
v_0=a(x_1-x_0)+(1-a)(x_2-x_0)
\end{equation}
Recall that $H\unlhd V(\Lambda)$ is the translation subgroup of the unlabeled lattice and $H_l$ is the subgroup of the labelled lattice. $H_l$ is an index $9$ subgroup of $H$ generated by $3(x_1-x_0),3(x_2-x_0)$ for which $R^2/H$ is a torus tiled by $18$ unfoldings of $\Delta$ (as the edge labels on parallel sides are equal) (see fig. \ref{fig:MSA2}).
\begin{figure}
\label{fig:MSA2}
\includegraphics[scale=.6]{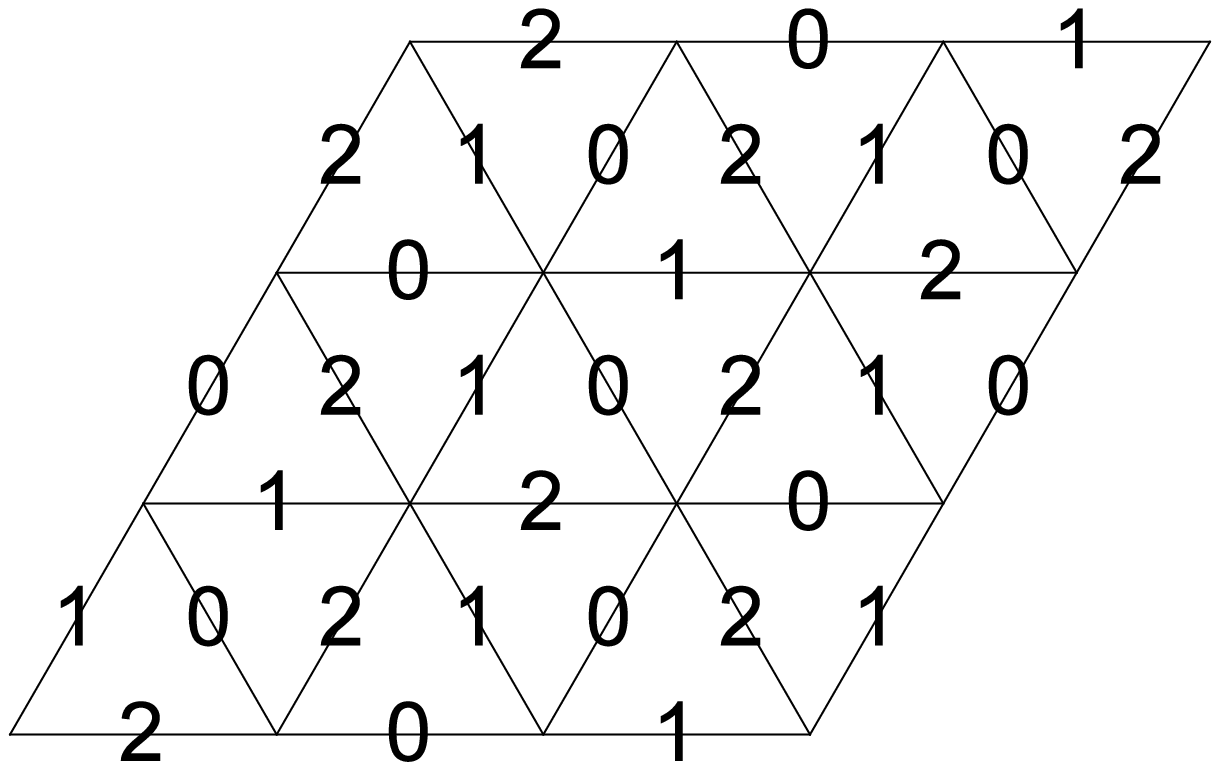}
\includegraphics[scale=.6]{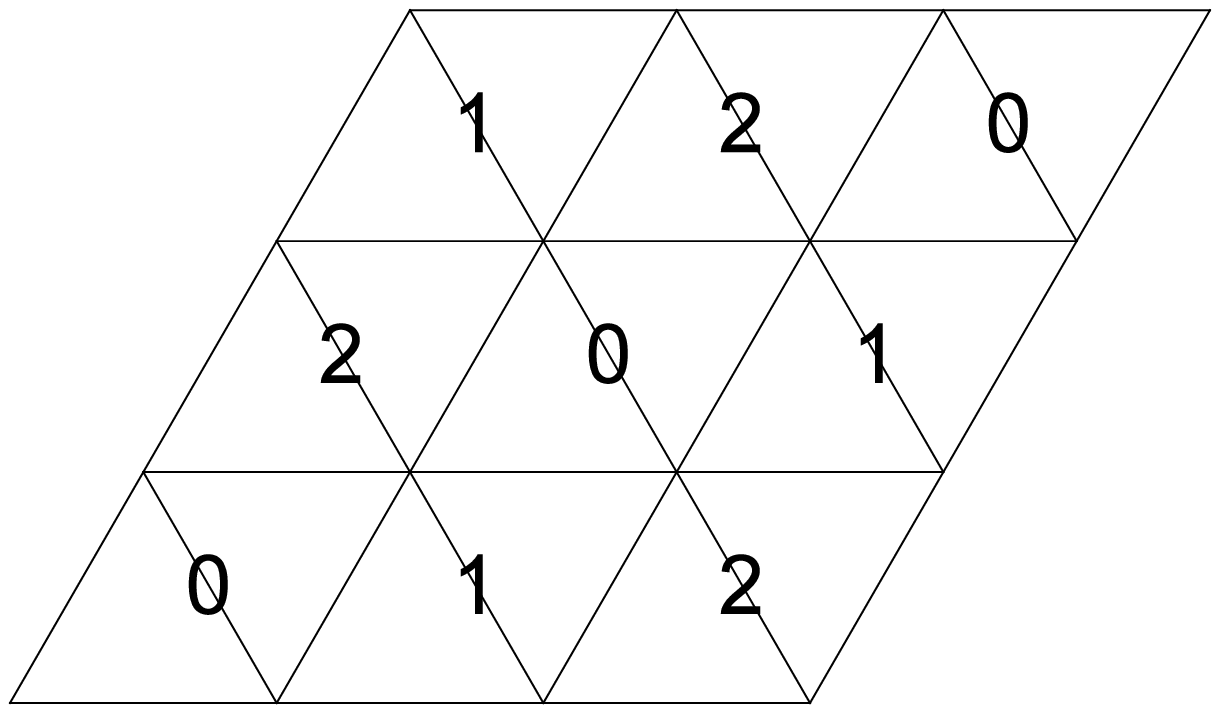}
\caption{To the left is a fundamental region for $R^2/H$ tiled by $18$ copies of $\Delta$ under reflection. To the right is the fundamental region for $R^2/H$ with coordinate directions $x_2-x_0$, $x_1-x_0$ removed.}

\end{figure}
Our key observation follows from direct inspection: any nonsingular trajectory $\tau$ in $\Lambda$ with direction $v_0$ as in \eqref{equ:initdir} intersects an edge in the direction $x_2-x_1$ followed by an edge in either of the directions from $E=\{(x_2-x_0),(x_1-x_0)\}$ in alternating sequence. Thus, by removing the edges from $\Lambda$ in the coordinate directions from $E$ removes every other digit from $\tau_s$. The resulting digit string $\overline{\tau_s}$ falls into the general scheme outlined in \S\ref{sec:calcs} above with $v_1\ (+1),v_2\ (-1)$ defined thusly. One may readily recover $\tau_s$ from $\overline{\tau_s}$ by inserting between each pair of distinct adjacent digits $i,j$ $i\neq j$ in $\overline{\tau_s}$ the missing digit from $\{0,1,2\}/\{i,j\}$. Thus the main theorems \ref{thm:digstr} and \ref{thm:Umbledone} apply to the ternary strings of $\overline{\tau_s}$. 
\begin{example}
$n=23 , q=5$\\
We have
\begin{align*}
n &= bq + r \\
23 &= 4\cdot 5+3 
\end{align*}
The $5$-orbits mod $23$ are
\begin{align*}
C_0 &=(0,5,10,15,20)\\
C_2 &=(2,7,12,17,22)\\
C_4 &=(4,9,14,19)\\
C_1 &=(1,6,11,16,21)\\
C_3 &=(3,8,13,18)\\
\end{align*}
The corresponding $(n,q)=(23,5)$ digit string is
\[01201\ 01201\ 0120\ 20120\ 2012\]
\[12012\ 12012\ 1201\ 01201\ 0120\]
\[20120\ 20120\ 2012\ 12012\ 1201\]
A billiard string inserts between every pair of digits the missing digit mod $3$. Thus, the billiard string in the direction $v_0=18/23v_1+5/23v_2$ is 
\[0210210212\ 0210210212\ 02102101\ 2102102101\ 21021020\]
\[1021021020\ 1021021020\ 10210212\ 0210210212\ 02102101\]
\[2102102101\ 2102102101\ 21021020\ 1021021020\ 10210212\]
Notice that the $q$-orbits 
\begin{equation}
(C_0,C_2,C_4,C_1,C_3)
\end{equation}
fall into the arithmetic progression $0,2,4,1,3$ of $-rk \mod q$. Therefore, the $2$-orbits mod $5$ are
\begin{align*}
C_0 &= (0,2,4)\\
C_1 &= (1,3)
\end{align*}
and the digit string of $(n,q) = (5,2)$ is
\[012\ 12\ 120\ 20\ 201\ 01\]
The statement of Theorem \ref{thm:strED} says the $(23,5)$-cycle can be built from
the $(5,2)$ string by replacing each number $y\in C_i$ by its corresonding $5$-cycle $C_y$. This is evident in the calculation above.       
\end{example}
%%%%%%%%%%%%%%%%%%%%%%%%%%%%%%%%%%%%%%%%%%%%%%%%%%%%%%%%%%
%%%%%%%%%%%%%%%%%%%%%%%%%%%%% D_2 %%%%%%%%%%%%%%%%%%%%%%%%%%
%%%%%%%%%%%%%%%%%%%%%%%%%%%%%%%%%%%%%%%%%%%%%%%%%%%%%%%%%%
\subsection{$\Box$ is a square table with Weyl group $D_2$}
As in $B_2$, we restrict our focus to initial directions $v_0 = (q/n, (n-q)/n)$ with $q<n/2.$ With the edges of $\Box$ labelled $0,1,2,3$, removing all edges of $\Lambda$ labelled $2,3$ yields a square lattice twice the size as $B_2$ (see fig. \ref{fig:MSD2}). Thus the classification theorems \ref{thm:Umbledone} \ref{thm:strED} apply for the $D_2$ billiard strings by means of the $(n,q)$ classes according to the ``$+0$ within $+1$'' between classes rule, modulo $2$, These are also known as \emph{Sturmian sequences} \cite{SU10}. 
We state the classification for $D_2:$
\begin{thm}
\label{thm:strD_2}
Write $n=bq+r$ The billiard strings of $D_2$ are in bijective correspondence with $(n,q)$-binary strings $d\in \{0,1\}^{\infty}$ built from two blocks of length $b,b+1$ of repeating digits. Adjacent blocks have opposite digits and the arrangement of blocks follow from the orbit of $-r,-2r,\ldots \mod q$.
\end{thm}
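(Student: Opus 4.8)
The plan is to deduce Theorem~\ref{thm:strD_2} from the general classification of Theorem~\ref{thm:Umbledone} together with the recursive cycle structure of Theorem~\ref{thm:strED}, specializing the generation rule to $i=0$, $j=1$ taken modulo $2$. First I would record the geometric reduction already described above: deleting the edges of $\Lambda$ labelled $2,3$ identifies a $D_2$ billiard string with a binary string on $\{0,1\}$, and along this reduced lattice the ``$+i$ within, $+j$ between'' rule becomes ``repeat the current bit while inside a $q$-cycle, and flip it (add $1 \bmod 2$) upon passing to the next $q$-cycle.'' With this dictionary in place the statement to prove is purely combinatorial: describe the binary words produced by the $(q,n)$-cycle $\mathcal{C}_{q,n}$ under this rule, and show the assignment is a bijection.

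Next I would translate the cycle data into the block data. Since the within-cycle increment is $+0$, every element of a single $q$-cycle $C_a$ receives the same bit, so $C_a$ contributes a maximal run of length $|C_a|$; since the between-cycle increment is $+1 \bmod 2$, the bit flips exactly when one cycle ends and the next begins, forcing adjacent blocks to carry opposite digits. This establishes the ``repeating digits within a block'' and ``adjacent blocks opposite'' clauses simultaneously. The two admissible block lengths are then read off from the dichotomy established in \S\ref{sec:qcycles}, namely that among the $q$ cycles exactly $r$ have size $b+1$ and the remaining $q-r$ have size $b$; thus the block lengths are precisely $b$ and $b+1$.

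For the arrangement of blocks I would invoke the ordering used in Theorem~\ref{thm:strED}: along the orbit $0,q,2q,\ldots \bmod n$ the cycles occur in the order $C_0, C_{-r}, C_{-2r},\ldots, C_{-(q-1)r}$, so the block sequence is indexed by the orbit $-r,-2r,\ldots \bmod q$, with the $k$th block long (length $b+1$) precisely when $-kr \bmod q < r$. This is exactly the claimed dependence on the residues $-kr \bmod q$, and it exhibits the arrangement as a balanced (Sturmian) word over $\{b,b+1\}$.

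The remaining and most delicate step is the bijection. The forward map $v_0 \mapsto \tau_s \mapsto d$ (reduced binary word) is the construction above; the inverse is supplied by the direction of Theorem~\ref{thm:Umbledone} that reconstructs a direction from a $(q,n)$-cycle word. Given a word $d$ of the stated shape I would recover its parameters by reading the two block lengths as $b$ and $b+1$, setting $q$ equal to the number of blocks in one period, $r$ equal to the number of long blocks, and $n=bq+r$. The point needing care is that this data is consistent and unambiguous: the hypothesis $q<n/2$ forces $b\ge 2$, so the within-cycle (majority) digit is distinguished and the $0\leftrightarrow 1$ ambiguity is removed; and the coprimality $\gcd(q,n)=1$, which guarantees a genuine period-$n$ orbit, matches primitivity of $d$ because $\gcd(q,n)=\gcd(q,r)$ and $\gcd(q,r)=1$ is exactly the condition that the block-length word $-kr \bmod q$ be aperiodic. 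The \emph{main obstacle} is verifying this last equivalence cleanly, together with confirming that the edge-deletion reduction is information-preserving, so that distinct primitive directions yield distinct reduced words and every word of the prescribed form is realized; once injectivity and surjectivity are in hand the bijection, and hence the theorem, follow.
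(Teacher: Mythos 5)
Your proposal follows essentially the same route as the paper, which proves Theorem \ref{thm:strD_2} in exactly this way: delete the edges of $\Lambda$ labelled $2,3$ to reduce to a square lattice, then invoke Theorems \ref{thm:Umbledone} and \ref{thm:strED} with the ``$+0$ within, $+1$ between'' rule modulo $2$ --- precisely your specialization $i=0$, $j=1$, with block lengths $b,b+1$ from the cycle-length dichotomy and the arrangement from the orbit of $-kr \bmod q$. The one difference is that you spell out the bijectivity details (parameter recovery $q$ = blocks per period, $r$ = long blocks, $n=bq+r$, and the equivalence $\gcd(q,n)=\gcd(q,r)=1$ with primitivity of the block-length word) that the paper leaves implicit in its brief deduction; this is a correct filling-in rather than a different argument.
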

\begin{figure}
\label{fig:MSD2}
\includegraphics[scale=.4]{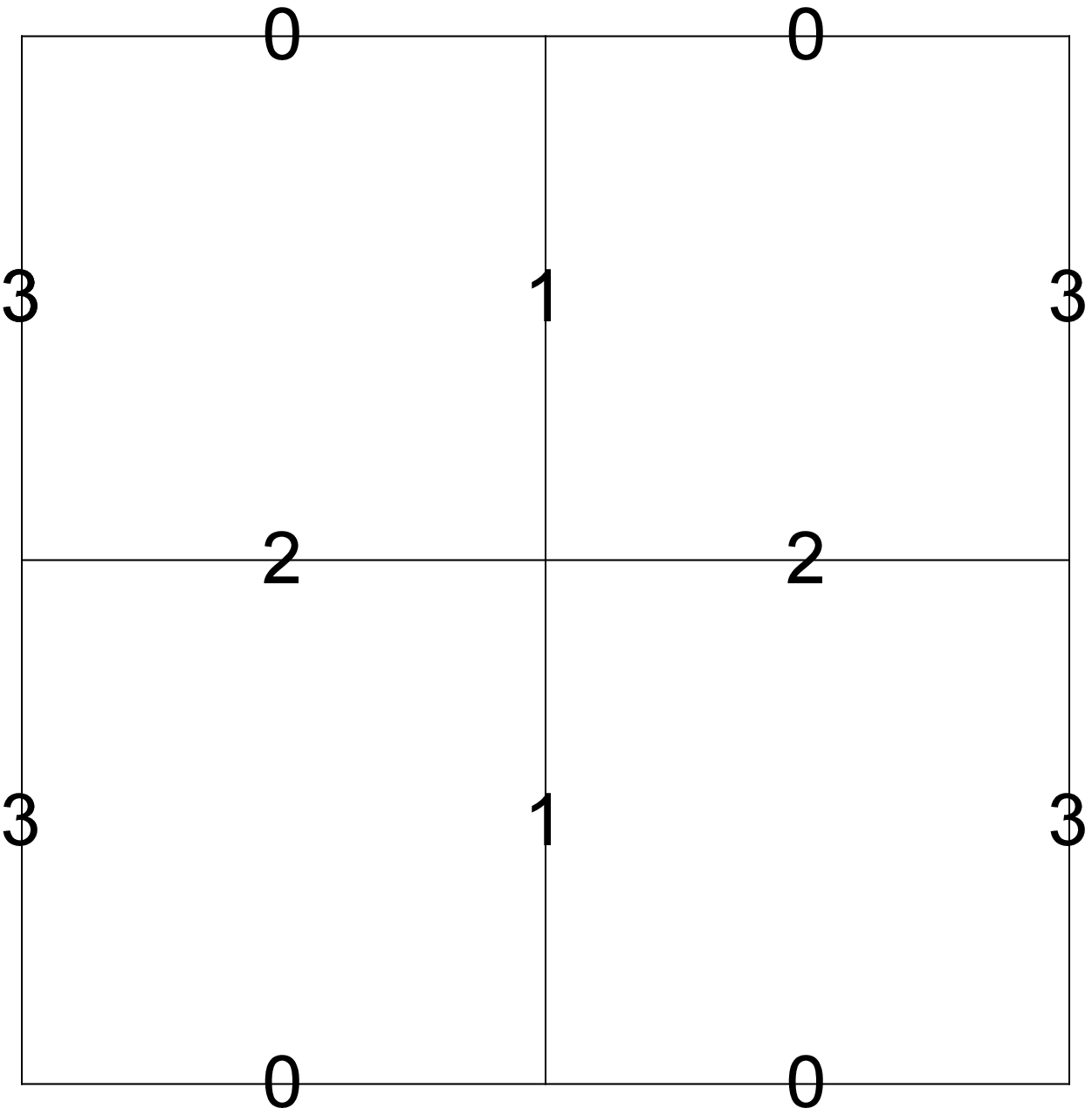}
\includegraphics[scale=.4]{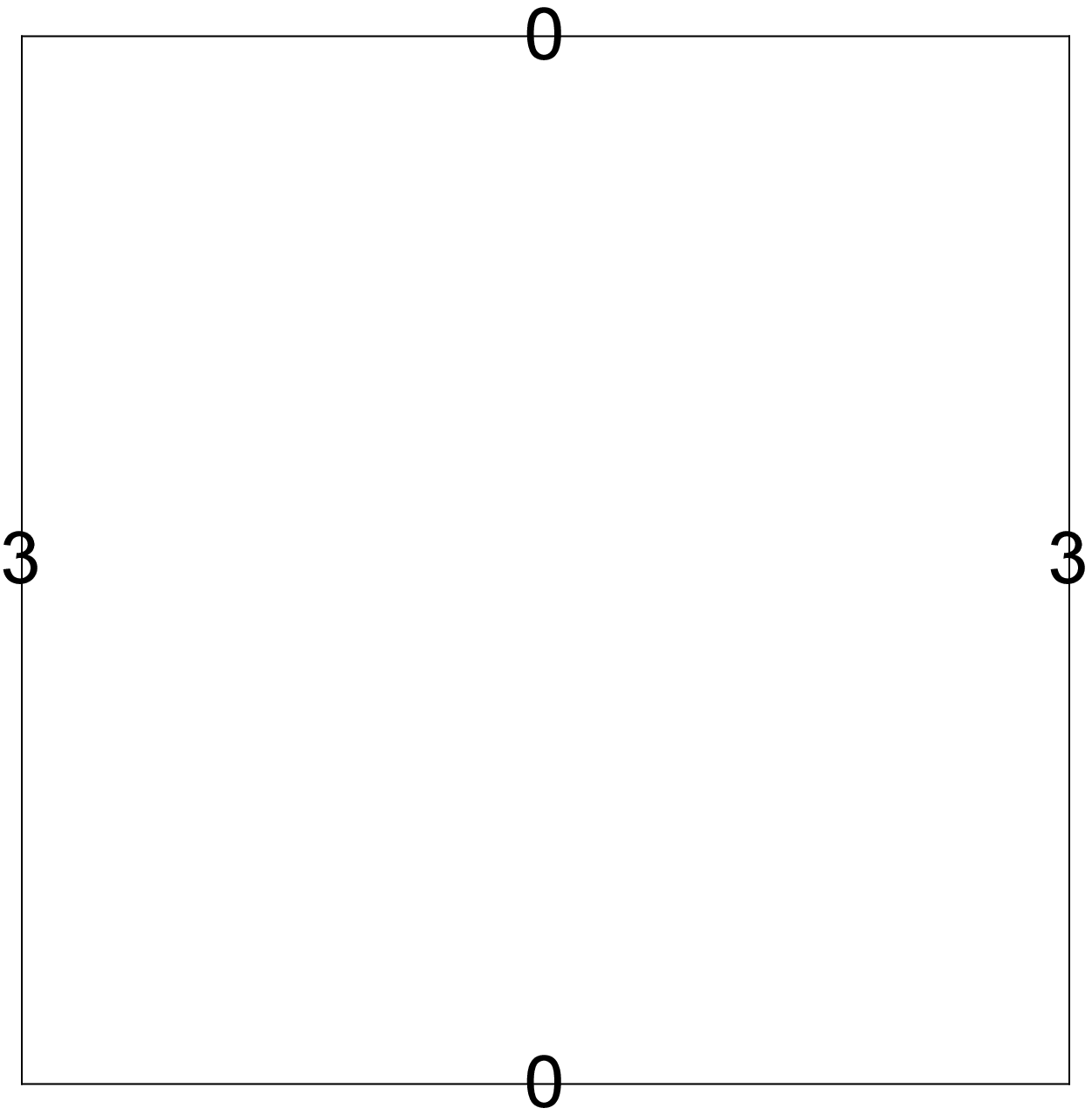}
\caption{Torus $\R^2/H$ for $D_2$, tiled by $4$ copies of $\Box$ under reflection. Torus $\R^2/H$ for $D_2$ with edges removed. The substiution $3\to 1$ yields the classical Sturmian sequences}
\end{figure}
Notice that between any pair of adjacent digits in $0,3$ is either $1,12,21,2$. 
If we remove all edges $0,3$ and apply \ref{thm:strD_2} we obtain another string in the digits $12$. If the intial position $p_0$ and $v_0$ are equal then the two sequnces are shuffled together by alternating between blocks from the $0,3$ and $1,2$ sequences.
%%%%%%%%%%%%%%%%%%%%%%%%%%%%%%%%%%%%%%%%%%%%%%%%%%%%%%%%%%%
%%%%%%%%%%%%%%%%%%%%%%%  B_2   %%%%%%%%%%%%%%%%%%%%%%%%%%%%%%%%
%%%%%%%%%%%%%%%%%%%%%%%%%%%%%%%%%%%%%%%%%%%%%%%%%%%%%%%%%%%
\subsection{$\Delta$ is a $45-45-90$ trianglular table with Weyl group $B_2$}
See figure $\ref{fig:MSB2}$. Notice that every other digit of a $B_2$ biliard string is $0$. Assume every even digit of $\tau_s$ is $0$; removing them yields a binary string $\tau_s$ in the digits $1$ and $2$ (see fig. \ref{fig:MSB2})). We prefer to use the binary digits $0,1$. Starting with edge $p_0 = (0,t), t\in [0,1]$ labelled $2$ in the figure, we note that a unit step up (in $(1,0)$) has the same digit as one which preceds it and a unit step right has the opposite digit. Thus we consider the truncated billiard string $\overline{\tau_s}$ where the unit steps $v_1$, $v_2$ append a digit that is $+0,+1$ to the preceding digit. The main theorems \ref{thm:Umbledone} \ref{thm:strED} apply for $(n,q)$-classes according to the $+1$ within classes, $+0$ between classes rule, mod $2$. We state the classification for $B_2:$
\begin{thm}
Write $n=bq+r$ The billiard strings of $B_2$ are in bijective correspondence with $(n,q)$-binary strings $d\in \{0,1\}^{\infty}$ built from two blocks of length $b,b+1$ of alternating digits. Adjacent blocks share the same digit and the arrangement of the blocks follow from the orbit of $-r,-2r,\ldots \mod q$.  
\end{thm}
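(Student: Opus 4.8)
The plan is to derive this classification as a direct specialization of the general machinery already established, specifically Theorem \ref{thm:Umbledone} and Theorem \ref{thm:strED}, rather than building anything from scratch. The key structural observation, visible in figure \ref{fig:MSB2}, is that the $45$-$45$-$90$ billiard string alternates the digit $0$ with digits drawn from $\{1,2\}$, so that after deleting every even (forced $0$) digit one obtains a genuine binary string $\overline{\tau_s}$. First I would make precise the reduction encoded in the preceding paragraph: the unit steps in directions $v_1,v_2$ append a digit that is $+0$ or $+1$ relative to the preceding digit modulo $2$, so the truncated string $\overline{\tau_s}$ is exactly the string produced by the $(n,q)$-cycle construction of Theorem \ref{thm:Umbledone} under the ``$+1$ within, $+0$ between'' rule. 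This identifies $B_2$ billiard strings with $(n,q)$-cycle strings over the alphabet $\{0,1\}$, establishing the bijective correspondence asserted in the theorem.

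Second, I would unwind what the ``$+1$ within, $+0$ between'' rule produces concretely in the binary alphabet. Within a single $q$-cycle $C_a$ of length $|C_a|$, each of the internal steps adds $+1 \bmod 2$, so the digits strictly alternate, giving a block of length $|C_a|$ of alternating digits; the single ``between'' step adds $+0 \bmod 2$, so it repeats the last digit of the block. This is precisely the contrast with the $D_2$ case of Theorem \ref{thm:strD_2}, where the roles are reversed (constant within, flip between); I would state this comparison explicitly since it clarifies why here \emph{adjacent blocks share the same digit} while the digits \emph{within} a block alternate. Because the two possible cycle lengths are $b$ and $b+1$ (established in \S\ref{sec:qcycles}), the two possible block lengths are exactly $b$ and $b+1$, as claimed.

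Third, the arrangement of the blocks is inherited verbatim from the $q$-cycle ordering. By the discussion preceding Theorem \ref{thm:strED}, the minimal elements of the $q$-cycles appear in the order $0,-r,-2r,\ldots,-(q-1)r \bmod q$, and the length of the cycle $C_{-kr}$ is determined by whether $-kr \bmod q$ lies in $\{0,\ldots,r-1\}$ (length $b+1$) or in $\{r,\ldots,q-1\}$ (length $b$). Thus the sequence of block lengths in $\overline{\tau_s}$ is precisely read off from the orbit of $-r,-2r,\ldots \bmod q$, which is the final assertion of the theorem. I would invoke Theorem \ref{thm:strED} here to note that this ordering is exactly the division-algorithm recursion, making the correspondence constructive in both directions.

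The main obstacle I anticipate is not in the $q$-cycle bookkeeping, which is routine given the earlier theorems, but in nailing down the reduction step rigorously: one must verify from the geometry of figure \ref{fig:MSB2} that \emph{every} nonsingular trajectory of direction $v_0$ genuinely alternates a $0$ with a $\{1,2\}$ digit, so that the deletion of even digits is well-defined and invertible, and that the surviving steps really implement $+1$ (within) and $+0$ (between) rather than some position-dependent rule. Establishing that the insertion map $\overline{\tau_s}\mapsto \tau_s$ is a genuine bijection --- in particular that the deleted digit is uniquely recoverable --- requires checking that the two flanking digits together with the parity constraint determine the missing $0$ unambiguously, which is immediate here since the deleted digit is always $0$, but I would state it to complete the bijection claim. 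Once that reduction is in hand, the theorem follows immediately by applying Theorems \ref{thm:Umbledone} and \ref{thm:strED} to the binary string $\overline{\tau_s}$.
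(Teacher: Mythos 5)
Your proposal is correct and takes essentially the same route as the paper, whose own justification is exactly the paragraph preceding the theorem: observe from figure \ref{fig:MSB2} that every other digit of a $B_2$ string is $0$, delete these to get the binary string $\overline{\tau_s}$ generated by the ``$+1$ within, $+0$ between'' rule mod $2$, and then invoke Theorems \ref{thm:Umbledone} and \ref{thm:strED} to get the block lengths $b,b+1$ and the ordering by the orbit of $-kr \bmod q$. Your extra attention to the invertibility of the deletion map and to verifying the alternation for every nonsingular trajectory tightens steps the paper leaves to inspection of the figure, but it does not change the argument.
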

\begin{figure}
\label{fig:MSB2}
\includegraphics[scale=.4]{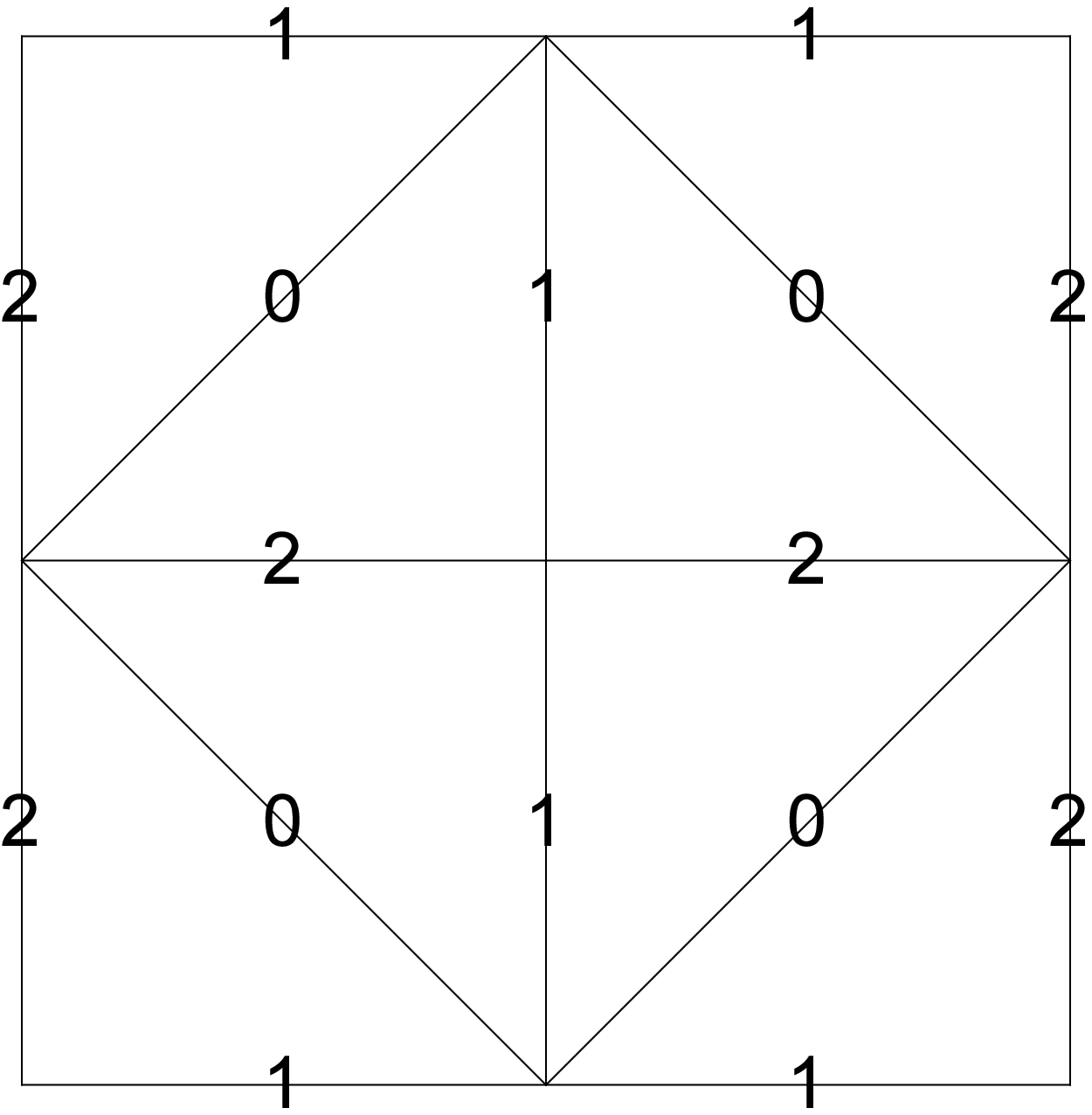}
\label{fig:MSB2r}
\includegraphics[scale=.4]{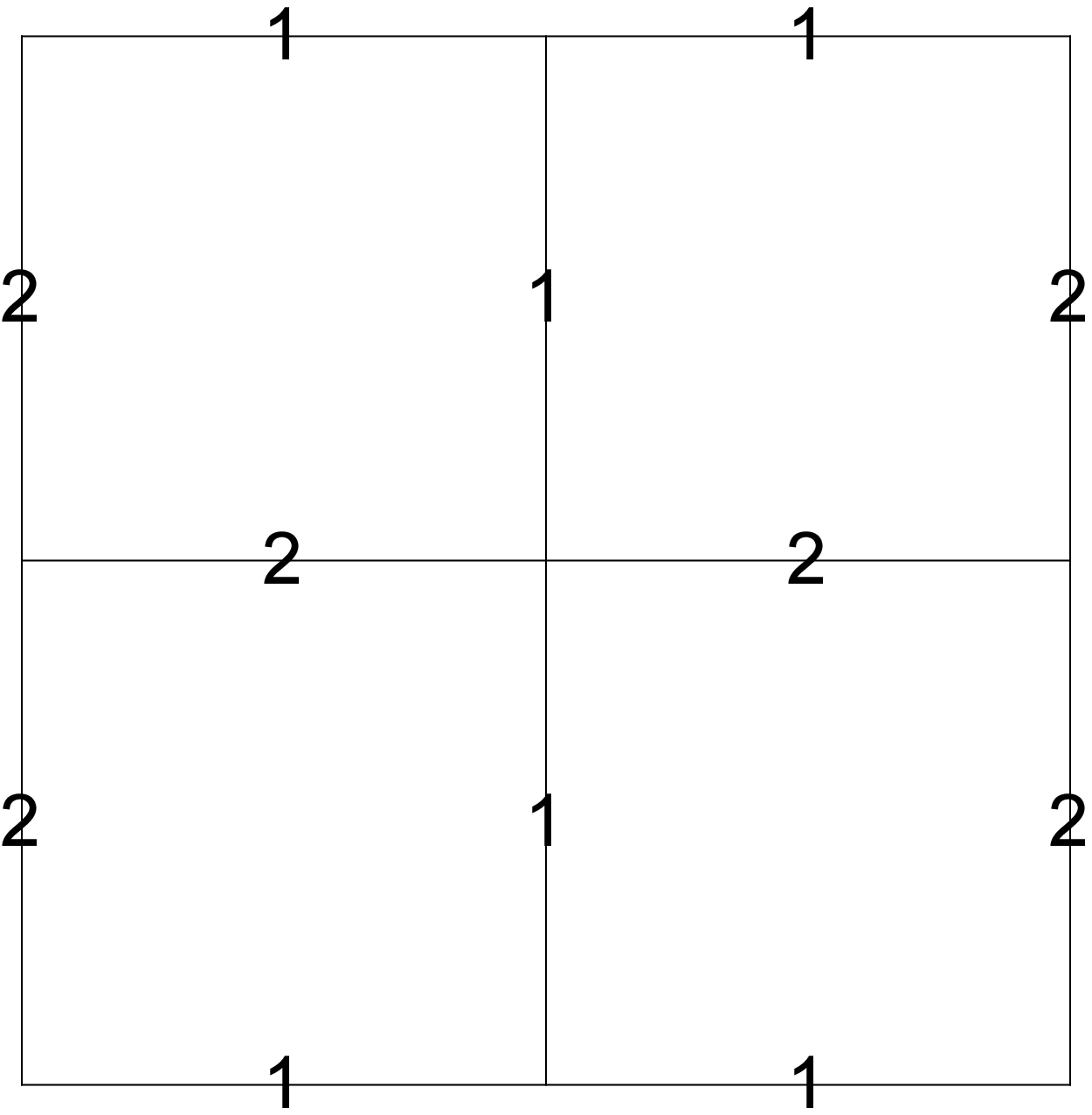}
\caption{To the left is the torus $\R^2/H$ for $B_2$ which tiled by $8$ copies of $\Delta$ under reflection. To the right is the torus $\R^2/H$ for $B_2$ with diagonal edges removed.}
\end{figure}
%%%%%%%%%%%%%%%%%%%%%%%%%%%%%%%%%%%%%%%%%%%%%%%%%%%%%%%%%
%%%%%%%%%%%%%%%%%%%%%%%%%%%%%% G_2 %%%%%%%%%%%%%%%%%%%%%%%%
%%%%%%%%%%%%%%%%%%%%%%%%%%%%%%%%%%%%%%%%%%%%%%%%%%%%%%%%%%
\subsection{$\Delta$ is a $30-60-90$ table with Weyl group $G_2$.}
See figure \ref{fig:MSG2}. Label the edges of $\Delta$ $0,1,2$ by increasing length. Removing the long edges (labelled $2$) from $\Lambda$ yields a rectangular lattice. There is an linear map taking the rectangulr lattice to the square lattice $D_2$ which distorts $t,\tau$ but leaves the digit sequence $\overline{\tau_s}$ invariant. Thus the $G_2$ billiard sequences are classified by $D_2$ strings above by shuffling the $0,1$ string with the constant sequence $2,2,\ldots$.
\begin{figure}
\label{fig:MSG2}
\includegraphics[scale=.4]{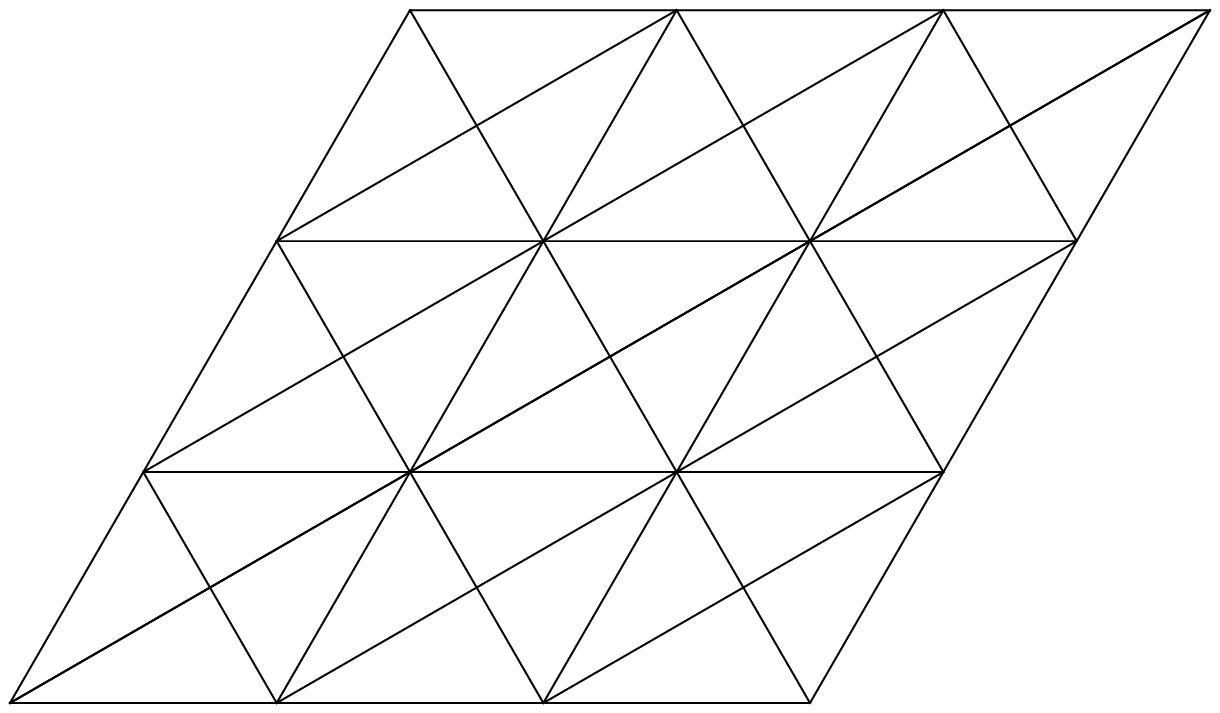}
\includegraphics[scale=.4]{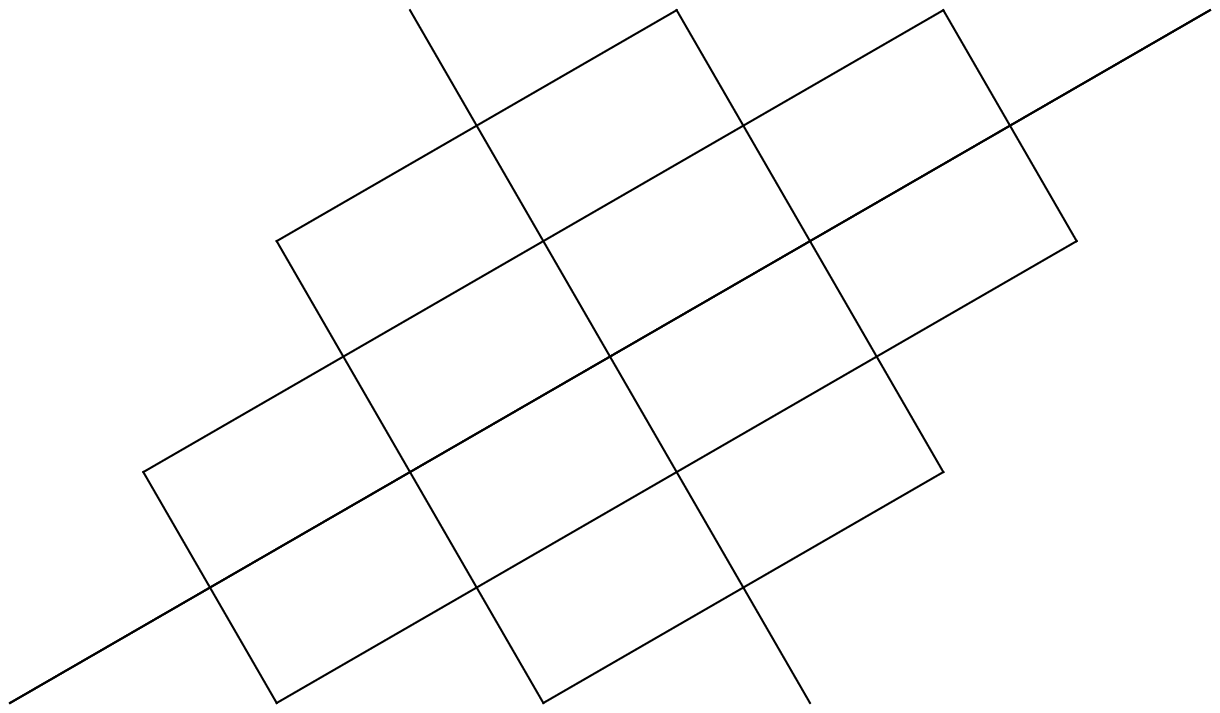}
\caption{Torus $\R^2/H$ for $G_2$, tiled by $4$ copies of $\Delta$ under reflection. Torus $\R^2/H$ for $D_2$ with edges removed. 9 fundamental regions are shown to reveal the underlying rectangular lattice.}
\end{figure}
\section{Aperiodic trajectories}
\label{sec:aperiodic}
Lastly we comment on aperiodic trajectories. By this we mean a billiard sequence which arises from an irrationally sloped line in $\R^2$. As a consequence of Minkowsk's Theorem, any $\epsilon$ band around $\tau$ passes through an integer point in $\Z^2$, and it is exactly at this point when the corresponding digit string swaps digits. While the explicit digit string is difficult to characterize, we can say that the corresponding translation class has an infinite number of elements since each element from a class is within $\epsilon$ to an integer point, for all $\epsilon>0$. Since each ray may be viewed as a winding line on a torus, $|[\tau]|$ infinite follows from $\tau$ dense on $\R^2/H.$ 
\section{Future research}
	Future research could extend the methods in this paper to billards in higher dimensions, i.e., billiard trajectoies in tables of higher rank affine Weyl groups $A_n,B_n$ (dual to $C_n$) $D_n$ $F_4, E_6,E_7,E_8$. The digit sequnces that arise should exhibit similar interesting number-theoretic properties. These ``tables'' would tessellate space by reflection along facets. A more general discussion of digit sequences on various tilings in $\R^2$ can be found in \cite{Da13}. Sturmian sequences for $2n$-gons were studied extensively in \cite{SU10}.

\end{document}